\newtheorem{theorem}{Theorem}
\newtheorem{thm}[theorem]{Theorem}
\newtheorem{lem}[theorem]{Lemma}
\newtheorem{cor}[theorem]{Corollary}
\newtheorem{hyp}[theorem]{Hypothesis}
\newtheorem{definition}[theorem]{Definition}
\newtheorem{ex}[theorem]{Example}
\newtheorem{rem}[theorem]{Remark}
\newenvironment{roof}{\noindent\textit{Proof.}}
 {\hspace*{\fill}\tiny$\blacksquare$}
\newcommand{\OPart}{\textrm{OPart}}
\newcommand{\Sym}{\textrm{Sym}}
\newcommand{\Co}{\textrm{Co}}
\newcommand{\s}{\mathcal{S}}
\newcommand{\N}{\mathbb{N}}
\newcommand{\la}{\langle}
\newcommand{\ra}{\rangle}
\newcommand{\Fixed}{\textrm{Fixed}} %
\newcommand{\Orb}{\textrm{Orb}}     
\newcommand{\DeepOrb}{\textrm{DeepOrb}}     
\begin{document}

\begin{frontmatter}

\title{New refiners for permutation group search}

\author{Christopher Jefferson}
\ead{caj21@st-andrews.ac.uk}
\ead[url]{http://caj.host.cs.st-andrews.ac.uk/}

\author{Markus Pfeiffer}
\address{University of St~Andrews\\School of Computer Science\\North Haugh\\St Andrews\\KY16 9SX\\Scotland}
\ead{markus.pfeiffer@st-andrews.ac.uk}
\ead[url]{https://www.morphism.de/~markusp/}

\author{Rebecca Waldecker}
\address{Martin-Luther-Universit\"at Halle-Wittenberg\\Institut f\"ur Mathematik\\06099 Halle\\Germany}
\ead{rebecca.waldecker@mathematik.uni-halle.de}
\ead[url]{http://conway1.mathematik.uni-halle.de/~waldecker/index-english.html}

\begin{abstract}
Partition backtrack is the current generic state of the art algorithm to search for subgroups
of a given permutation group.
We describe an improvement of partition backtrack for set stabilizers and intersections
of subgroups by using orbital graphs. With extensive experiments
we demonstrate that our methods improve performance of partition backtrack -- in some cases by several
orders of magnitude.
\end{abstract}

\begin{keyword}
Backtrack search, refiners, permutation groups, algorithmic group theory,
computational algebra, partition backtrack.
\end{keyword}
\end{frontmatter}

\section{Introduction}
\label{sec:intro}

Permutation groups are one of the most natural and convenient representations
of finite groups. They have proved particularly useful for
computational purposes, and systems such as~\cite{GAP4} and Magma (\cite{MR1484478})
provide efficient implementations of many algorithms to solve
a range of problems from membership testing to identification of the isomorphism
type of a group.

Given a permutation group acting on a finite set \(\Omega\), some problems -- for
example checking whether or not a group contains a particular permutation, or computing the
size of the group -- can be solved in time polynomial in the size of
\(\Omega\). There are problems for which no polynomial time algorithm is known, for example computing intersections, centralizers and normalizers of subgroups,
and set and partition stabilizers. For these problems the best known algorithms perform a very sophisticated exhaustive search through the group in question.
It is known (see for example Chapter 3 in \cite{seress2003permutation}) that these
problems are at least as difficult as graph isomorphism, so it is unlikely that a
polynomial time algorithm can be found.

The current state of the art algorithm is described in \cite{Leon} and is commonly called
partition backtrack. It extends ideas introduced in \cite{McKay80} to  solve graph isomorphism problems. 
Implementations of partition backtrack are available in \cite{GAP4} and Magma (\cite{MR1484478}),
and they solve the problems mentioned above very efficiently for a fair range of examples.

This does not mean that all permutation group problems can be solved
easily or quickly using partition backtrack -- and this is where our orbital
graph methods come into play.
The ideas presented in this paper improve performance by several orders of
magnitude for a range of examples, as will be demonstrated in Section 6.
Partition backtrack should not be viewed as a monolithic algorithm, but rather
as a combination of algorithms solving particular sub-problems.
The concept of a \textbf{refiner} is one of its crucial components.
They are used to detect and skip parts of the computation that would be
superfluous: Better refiners lead to more superfluous computational steps to be
skipped.

We describe a new class of refiners using orbital graphs. It can be used to
improve performance of partition backtrack implementations, and we demonstrate the speed-up
that can be achieved in our implementation of partition backtrack.
This article is organized as follows:

Section~\ref{sec:notation} includes necessary notation and examples of permutation groups and
ordered partitions.
In Section~3 we give a brief description of backtrack search and the role of
refiners, and we discuss some standard refiners.
Section~4 introduces orbital graphs and gives a characterization of the cases where
orbital can benefit refinement, we briefly discuss the limits of refiners using orbital graphs: For example, for $2$-transitive groups, orbital graphs do not provide any improvement. 
Section~5 then combines ideas introduced in Sections~3 and 4 to define new refiners using orbital graphs.
Finally, we explain and present experiments and their outcomes in Section 6. In particular, there are cases in which the costs of calculating orbital graphs outweigh the advantages in the search,
but for some search problems where the previously best techniques perform poorly, our methods prove to be very effective.

At the end of the paper we comment on related questions and further research.

\textbf{Acknowledgements.}

All authors thank the DFG (\textbf{Wa 3089/6-1}) and the EPSRC CCP CoDiMa
(\textbf{EP/M022641/1}) for supporting this work. The first author would
like to thank the Royal Society, and the EPSRC (\textbf{EP/M003728/1}).
The second author would
like to acknowledge support from the OpenDreamKit Horizon 2020 European Research
Infrastructures Project (\#676541). The third author wishes to thank the
Computer Science Department of the University of St~Andrews for its hospitality
during numerous visits, and Karin Helbich for the pictures in this article.

\section{Notation, basic results and examples}
\label{sec:notation}

We mostly use standard notation for permutation groups and related objects and
refer the reader to references such as \cite{dixon1996permutation}.

Throughout we let $\Omega$ be a finite set and $n \in \N$.
We use $\Sym(\Omega)$ as notation for the \textbf{symmetric group on $\Omega$}
and $\s_n$ for the symmetric group on $\{1,...,n\}$.

\begin{definition}[Ordered Partitions]\label{basicdef}
\mbox{}
\begin{itemize}
\item
An \textbf{ordered partition} of $\Omega$ is an ordered list of disjoint
subsets (called \textbf{cells}) of $\Omega$ whose union is all of $\Omega$.
For all $i \in \Omega$ we write $\Delta_P(i)$ for the cell of $P$ that contains the point $i$, and we
let $\OPart(\Omega)$ denote the set of \textbf{all ordered partitions of
$\Omega$}.
The notation for ordered partitions will be explained in Example \ref{firstex}.
If $P\in \OPart(\Omega)$ and $i,j \in \Omega$, then we write $i \sim_P j$ if and only if 
$i \in \Delta_P(j)$.

\item
  We say that \textbf{$Q$ is finer than $P$} and write $Q \preccurlyeq P$ if and only if, for all $i,j \in \Omega$, it is true that
   $i \sim_Q j$ implies $i \sim_P j$. 
  Conversely, we say that $P$ is \textbf{coarser} than $Q$ in this situation.

  The relation $\preccurlyeq$ defines a partial order on $\OPart(\Omega)$ and we point out that, for all $P\in \OPart(\Omega)$,
  it is true that $P \preccurlyeq P$.

  We call an ordered partition \textbf{discrete} if and only if every element of $\Omega$ is in a
  cell by itself, and we call an ordered partition \textbf{trivial} if and only if $\Omega$ itself is its only cell.

\item
  If $P$ is an ordered partition of $\Omega$ and $g \in G$, then we write $P^g$ for the ordered
  partition that we obtain by applying $g$ to the elements in the cells of $P$.

\item
  Important ordered partitions come from the action of subgroups of $\Sym(\Omega)$ on
  $\Omega$. We call them \textbf{ordered orbit partitions}.
  If $H\le\Sym(\Omega)$ and $P\in\OPart(\Omega)$, then we say that $P$ is an ordered
  orbit partition for $H$ if and only if the cells of $P$ are exactly the orbits
  of $H$ on $\Omega$.
  We point out that, if $H$ has more than one orbit, then there are several
  distinct ordered orbit partitions, differing only by the ordering of the cells.

\item
  Suppose that $P$ and $Q$ are ordered orbit partitions of $\{1,...,n\}$, that $k \in \N$ and that
  $\Delta_1,...,\Delta_k$ are exactly the cells of $P$.
Then we write $\Sym(P)$ for the subgroup
  $\Sym(\Delta_1)\times\cdots\times\Sym(\Delta_k)$ of $\Sym(\Omega)$, i.e. the
 \textbf{stabilizer of the ordered partition $P$ in $\Sym(\Omega)$}. We use $\Co(P,Q)$ for the set
  of permutations in $\Sym(\Omega)$ that map \(P\) to \(Q\). Note that $\Co(P,Q)$
  will either empty or a coset of $\Sym(P)$. Moreover $\Co(P,P)=\Sym(P)$.

\end{itemize}

\end{definition}

\begin{ex}\label{firstex}
Given the algorithmic background of our work, we view partitions as lists.
For example $P:=[1,2,3,4 \mid 5,6,7]$ is an ordered partition of $\Omega:=\{1,2,3,4,5,6,7\}$ with cells $\{1,2,3,4\}$ and $\{5,6,7\}$. 
Then $P=[2,1,3,4 \mid 5,6,7]$, because the
  ordering of elements within a cell is irrelevant. But $P \neq [5,6,7 \mid 1,2,3,4]$, because the ordering
  of cells is relevant. However, we see that ${P \preccurlyeq
  [5,6,7 \mid 1,2,3,4]}$.
  Let $Q:=[1,2,3 \mid 4 \mid 5,6,7]$. Then $Q \preccurlyeq P$ and  $Q \preccurlyeq [4,5,6,7 \mid 3,2,1]$.

Next we consider the stabilizer in $\s_7$ of $P$.
  If $H_1 \le \s_7$ is the subgroup stabilizing the set $\{1,2,3,4\}$ and $H_2
  \le \s_7$ is the subgroup stabilizing the set $\{5,6,7\}$, then $\Sym(P)$ is
  exactly $H_1 \times H_2$.

  Finally, we look at the subgroup $H:=\langle (1\,2\,3), (5\,7)\rangle$ of $\s_7$ and
  we write down two of its ordered orbit partitions: $[1,2,3 \mid 4 \mid 5,7 \mid 6]$ and $[4 \mid 6 \mid 5,7 \mid 1,2,3]$.
\end{ex}

\begin{definition}[Meet]
Let $P, Q \in \OPart(\Omega)$.
Then we define 
the \textbf{meet of $P$ and $Q$}, denoted by $P \wedge Q$, as follows: 
$i,j \in \Omega$ are in the same cell of
 $P \wedge Q$ if and only if $i \sim_P j$ and $i \sim_Q j$.
For every cell of $P \wedge Q$ there is a unique pair of cells of $P$ and $Q$
that its elements are from, and the cells of $P \wedge Q$ are ordered
lexicographically with respect to these pairs.
\end{definition}

\begin{rem}
There are several reasons why ordered partitions are used in partition backtrack. One reason is that we can represent all elements of the group as coset representatives of some coset of the stabilizer of an ordered partition. This approach does not work with unordered partitions. Another reason is that, with ordered partitions, the relation ``meet'' is compatible with taking stabilizers of ordered partitions. This property no longer holds if unordered partitions are used.
One final comment:
The relation ``meet'' is not symmetric, as is illustrated in the following example.
\end{rem}

\begin{ex}
Let $P:=[1,2,3,4 \mid 5,6,7]$ and $Q:=[1,2 \mid 5,3 \mid 7,4,6]$ be ordered partitions of $\Omega:=\{1,2,3,4,5,6,7\}$.
We calculate $P \wedge Q$: For each element $i \in \Omega$, we find the indices of the cells $\Delta_P(i)$ and $\Delta_Q(i)$, in this order. This gives the following pairs:

  $1$ --- $(1,1)$,
  $2$ --- $(1,1)$,
  $3$ --- $(1,2)$,
  $4$ --- $(1,3)$,
  $5$ --- $(2,2)$,
  $6$ --- $(2,3)$,
  $7$ --- $(2,3)$.

Therefore $P \wedge Q = [1,2 \mid 3 \mid 4 \mid 5 \mid 6,7]$.
Calculating $Q \wedge P$ instead gives the ordered partition $[1,2 \mid 3 \mid 5 \mid 4 \mid 6,7]$; it has the same cells as $P \wedge Q$, but in a different order.
\end{ex}

It will be important later that taking meets of ordered partitions of $\Omega$ is
compatible with the action of elements of $\Sym(\Omega)$ on $\Omega$.

\begin{lem}\label{lem:meetandact}
  Let $H\leq\Sym(\Omega)$, let $P$ and $Q$ be ordered partitions of $\Omega$, and let
  $h \in H$.

  Then ${(P \wedge Q)}^h = P^h \wedge Q^h$.
\end{lem}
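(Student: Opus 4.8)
The plan is to prove the equality of the two ordered partitions $(P\wedge Q)^h$ and $P^h\wedge Q^h$ by checking the two things that determine an ordered partition: that the underlying cells coincide as sets, and that these cells appear in the same order. The starting point for both is the elementary observation that applying $h$ to an ordered partition $P$ maps the $k$-th cell of $P$ bijectively onto the $k$-th cell of $P^h$, by the very definition of $P^h$. In particular, $h$ preserves cell membership and cell position: for all $i,j\in\Omega$ we have $i\sim_P j$ if and only if $i^h\sim_{P^h}j^h$, and the index of the cell $\Delta_P(i)$ equals the index of the cell $\Delta_{P^h}(i^h)$. The same two facts hold with $Q$ in place of $P$.

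First I would settle the set-theoretic part. Using the displayed equivalence for $P$ and the analogous one for $Q$, together with the definition of the meet, one checks that $i^h$ and $j^h$ lie in a common cell of $(P\wedge Q)^h$ if and only if $i\sim_P j$ and $i\sim_Q j$, and likewise that $i^h$ and $j^h$ lie in a common cell of $P^h\wedge Q^h$ if and only if $i\sim_P j$ and $i\sim_Q j$. Since $h$ is a bijection of $\Omega$, this shows that the two partitions have exactly the same cells as sets, and that the map $C\mapsto C^h$ is a bijection between the cells of $P\wedge Q$ and the cells of $P^h\wedge Q^h$.

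The step that requires the most care, and which I expect to be the main obstacle, is matching the \emph{orderings} of the cells, because the cells of a meet are sorted lexicographically by the pair of indices of the cells of $P$ and $Q$ from which their elements come. Here I would use that $h$ preserves cell indices in both $P$ and $Q$: if a cell $C$ of $P\wedge Q$ is associated with the index pair $(a,b)$, meaning its elements lie in the $a$-th cell of $P$ and the $b$-th cell of $Q$, then by the preservation facts above the elements of $C^h$ lie in the $a$-th cell of $P^h$ and the $b$-th cell of $Q^h$, so the corresponding cell of $P^h\wedge Q^h$ carries the same pair $(a,b)$. Because the bijection $C\mapsto C^h$ preserves the associated index pairs, and because the lexicographic order depends only on these pairs, the two families of cells are sorted into the same order. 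Combining this with the set-theoretic part yields $(P\wedge Q)^h=P^h\wedge Q^h$.
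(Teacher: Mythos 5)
Your proposal is correct and follows essentially the same route as the paper: first establishing that the two partitions agree as unordered partitions via the equivalence $i\sim_{(P\wedge Q)^h}j \Leftrightarrow i\sim_{P^h\wedge Q^h}j$, and then handling the cell ordering by observing that $h$ preserves the index pair $[\Delta_P(\cdot),\Delta_Q(\cdot)]$ attached to each cell of the meet, which is exactly the paper's final displayed computation. You correctly identified that the ordering is the delicate point, which the paper also flags explicitly.
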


\begin{proof}
First we note: If $g \in \Sym(\Omega)$, then $i \sim_{P^g}j$ if and only if
  $i^{g^{-1}} \sim_P j^{g^{-1}}$. Let $T:=P \wedge Q$. Then by definition
  $i \sim_T j$ if and only if $i \sim_P j$ and $i \sim_Q j$.
 Now, for all $h \in H$:
  $$ i\sim_{T^h} j \Leftrightarrow i^{h^{-1}} \sim_T j^{h^{-1}}
                   \Leftrightarrow i^{h^{-1}} \sim_P j^{h^{-1}} \mbox{ and } i^{h^{-1}} \sim_Q j^{h^{-1}} \Leftrightarrow i \sim_{P^h} j \mbox{ and } i\sim_{Q^h} j
                   \Leftrightarrow i \sim_{P^h \wedge Q^h} j.
  $$
So far we proved that ${(P \wedge Q)}^h \preccurlyeq P^h \wedge
Q^h$
and 
$P^h \wedge Q^h \preccurlyeq {(P \wedge Q)}^h$, but
this does not imply equality.
We can describe the cell  $\Delta_{P \wedge Q}(i)$ by the pair $[\Delta_P(i), \Delta_Q(i)]$. So we argue as follows:
$$
 \Delta_{{(P \wedge Q)}^h}(i) = \Delta_{(P \wedge Q)}(i^{h^{-1}}) = [\Delta_P(i^{h^{-1}}),\Delta_Q(i^{h^{-1}})] = [\Delta_{P^h}(i), \Delta_{Q^h}(i)] = \Delta_{P^h\wedge Q^h}(i).
$$  
  
This implies that ${(P \wedge Q)}^h = P^h\wedge Q^h$.
\end{proof}

\section{Refining Ordered Partitions}
\label{sec:refiner}

\subsection{Partition Backtrack}

Partition backtrack is a technique for solving search problems on permutation
groups. It is implemented in \cite{GAP4} and Magma (\cite{MR1484478}) as the main technique for solving a range of problems, including
computing group and coset intersections, set and partition stabilizers,
centralizers, and normalizers. This paper will not provide a full description of
partition backtrack, instead we refer readers to~\cite{Leon}.

In brief, partition backtrack searches for elements of $\Sym(\Omega)$ that
satisfy a list of properties. These properties will typically be of the form
``element lies in a subgroup of $\Sym(\Omega)$'' or ``element lies in a coset of
a subgroup of $\Sym(\Omega)$''. For example, finding the stabilizer of a set $S$ in a subgroup $H$ of $\Sym(\Omega)$ can
be expressed as finding all elements that satisfy the list of properties ``element stabilizes $S$ in $\Sym(\Omega)$'' and
``element is contained in the subgroup $H$''.
We will not specify ``properties'' further because we have many different applications in mind, but each property should be easy to verify with an algorithm. 
The groups that appear in the search can be expressed in a variety of
ways, for example by a set of generators, as the normalizer of a
group, as the automorphism group of a graph, or as the stabilizer of a set or
ordered partition in $\Sym(\Omega)$.

To give a more precise description: Partition backtrack takes a list of properties as input, then it starts from the pair $(P_0,Q_0)$ of trivial
ordered partitions and proceeds to perform search by repeatedly alternating between
the following two phases, starting at $i=0$:

\begin{enumerate}

\item \textbf{Refinement phase:} Start with the first property of the list
and refine the pair of ordered partitions $(P_i,Q_i)$ to a new pair $(P,Q)$ such that the
following holds: Every element of $\Sym(\Omega)$ in $\Co(P_i,Q_i)$ is also in
$\Co(P,Q)$. Hence, we have not removed any permutations that satisfy all
properties on the list.

Repeat this process with the second property from the list and continue through all
properties. Start again at the beginning of the list until the ordered partition
does not change anymore. The resulting pair of ordered partitions will be called
$(P_{i+1},Q_{i+1})$.

\item \textbf{Branching phase:} Take $P_{i+1}$ and $Q_{i+1}$. At this point,
three cases are possible:

\begin{enumerate}

\item There is no permutation in $\Sym(\Omega)$ that maps $P_{i+1}$
to $Q_{i+1}$. This means that this part of the search does not produce
any permutation that satisfies all properties on the list. 

\item Every cell in $P_{i+1}$ and $Q_{i+1}$ is of size one. Then there
is exactly one permutation that maps $P_{i+1}$ to $Q_{i+1}$. Perform a final
check that this satisfies all properties on the list, and if it does, then record it as a
solution.

\item Split the search by producing a list of pairs of ordered partitions
$(P'_1,Q'_1),\dots,(P'_k,Q'_k)$ where the $\Co(P'_j,Q'_j)$ for $j \in
\{1,\ldots,k\}$ form a disjoint union of $\Co(P_{i+1},Q_{i+1})$.

The practical method that we choose for splitting is the following:

We choose $l \in \N$ such that the $l$-th cell $c$ of $P_{i+1}$ has size at least $2$, and we choose
a single element $a$ from $c$. We make all the $P'_j$ equal to $P_{i+1}$, except that
$a$ is removed from the cell $c$ and placed in a new cell at the end, by itself.
For each element $b$ of the $l$-th cell $d$ of $Q_{i+1}$, we create $Q'_j$, which
is identical to $Q_{i+1}$ except that $b$ is removed from cell $d$ and placed in a
new cell at the end, by itself.

This gives new pairs of ordered partitions that are finer than $(P_{i+1}, Q_{i+1})$.
For each of these pairs, the search continues by going to the refinement phase.
\end{enumerate}

\end{enumerate}

This process will stop eventually because 
there is a finite number of branches, they all have finite length, and the refinement stops if the partition does not change anymore. In~\cite{Leon} the
author explains how this algorithm can be implemented efficiently.

This paper will focus on refinement, because the quality of refiners
is one of the main influences on performance.
Refiners are only defined a single ordered partition -- Lemma \ref{lem:backtrack} explains how refiners are used on cosets.

\begin{definition}[M-refiner]\label{def:refine}
Let $M \subseteq \Sym(\Omega)$ be a set of elements with a given property.
An \textbf{$M$-refiner} is a map
\(f_M:\OPart(\Omega) \rightarrow \OPart(\Omega) \) that satisfies the following
conditions for any ordered partition \(P\) of \(\Omega\):
  \begin{itemize}
  \item[(a)] $f(P) \preccurlyeq P$.
  \item[(b)] For all $g \in M$, it is true that $f(P^g)
    = {f(P)}^g$.
  \end{itemize}
\end{definition}

Part (b) of the definition implies that every element in $\Sym(P)$ that satisfies the
property (referring to $M$) and stabilizes $P$ also stabilizes $f(P)$.
This turns out to be a very natural condition -- our
experience is that refiners tend to satisfy (b). It is not only a natural requirement for a refiner, but it is
also one of the main reasons why partition backtrack is so efficient.
For more details see Sections 6 and 7 in~\cite{Leon}.

Lemma~\ref{lem:backtrack}
shows how, using an $M$-refiner (which refers to only a single ordered
partition), we can perform filtering on cosets, as required by our search
defined above. This greatly simplifies our implementation.

\begin{lem}\label{lem:backtrack}
Let $P,Q \in \OPart(\Omega)$ and let
$f_M$ be an $M$-refiner for $M \subseteq Sym(\Omega)$. 
Then for all $g \in M$ such that
$g \in \Co(P,Q)$, it follows that $g \in \Co(f_M(P), f_M(Q))$.
\end{lem}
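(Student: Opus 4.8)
The plan is to unwind the definition of $\Co$ and then apply property~(b) of the refiner directly. First I would recall that, by the definition of $\Co(P,Q)$, the hypothesis $g \in \Co(P,Q)$ is simply the assertion that $P^g = Q$; likewise, the desired conclusion $g \in \Co(f_M(P), f_M(Q))$ is exactly the equation $f_M(P)^g = f_M(Q)$. So the whole task reduces to deriving this single equation from the two hypotheses $P^g = Q$ and $g \in M$.

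Next I would apply the map $f_M$ to both sides of $P^g = Q$, which gives $f_M(P^g) = f_M(Q)$ since $f_M$ is a well-defined function on $\OPart(\Omega)$. This is the only place the hypothesis $g \in M$ enters: because $g \in M$, property~(b) of \cref{def:refine} permits rewriting the left-hand side as $f_M(P^g) = f_M(P)^g$. Chaining the two equalities yields $f_M(P)^g = f_M(Q)$, which is precisely the statement $g \in \Co(f_M(P), f_M(Q))$.

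There is no genuine obstacle here; the result is an immediate consequence of the equivariance condition~(b). Two points are worth flagging for clarity. First, property~(a) (that $f_M(P) \preccurlyeq P$) is not used in this argument at all: it guarantees that the output is really a refinement, and thereby justifies the search-space reductions in practice, but it plays no role in showing that a mapping element is preserved. Second, membership $g \in M$ is essential, since without it condition~(b) could not be invoked and the conclusion could fail; this is exactly why the lemma is phrased as a statement about elements of $M$ lying in $\Co(P,Q)$ rather than about the coset $\Co(P,Q)$ as a whole.
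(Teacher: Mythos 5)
Your proof is correct and follows essentially the same route as the paper's: unwind $g \in \Co(P,Q)$ to $P^g = Q$, apply $f_M$, and invoke the equivariance condition~(b) to conclude $f_M(P)^g = f_M(Q)$. The additional remarks about the non-use of property~(a) and the necessity of $g \in M$ are accurate but not needed.
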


\begin{proof}
If \(g \in \Co(P,Q)\), then \(Q = P^g\), and by definition of an $M$-refiner it holds that $f_M(Q) = f_M(P^g) = {f_M(P)}^g$, and therefore
\(\Co(f_M(P),f_M(Q)) = \Co(f_M(P), {f_M(P)}^g)\), and so

\(g \in \Co(f_M(P),f_M(Q))\).
\end{proof}

\subsection{Refiners for Permutation Groups}

We will now give a definition of the standard refiner for permutation groups given by a list of generators from~\cite{Leon}. We will introduce refiners that use orbital graphs in Section~\ref{sec:orbrefiner}.

\begin{definition}[$\Fixed_M$]\label{def:genrefine}
  Let $M$ be a subgroup of $\Sym(\Omega)$. Then the map $\Fixed_M : \OPart(\Omega) \rightarrow \OPart(\Omega)$
  is defined as follows:

  \begin{itemize}
  \item Given $P \in \OPart(\Omega)$, let $k \in \N$ and
    $\alpha_1,\ldots,\alpha_k \in \Omega$ be the elements in singleton cells of $P$.
    Let \(M_0\) denote the point-wise stabilizer of $\alpha_1,\ldots,\alpha_k$ in $M$.
  \item Return the meet of \(P\) and an ordered orbit partition of $M_0$.
  \end{itemize}
\end{definition}

This map does not necessarily give a refiner as it is: The reason is that we do
not define the ordering of the cells in the resulting ordered orbit partition.
In the implementation this is fixed by producing a list of orbits,
outputting the cells in arbitrary order and then fixing this ordering for
later instances.
The details are given in the proof of the next lemma.

\begin{lem}\label{Fixedref}
  Let $M$ be a subgroup of $\Sym({\Omega})$. Then the map $\Fixed_M$ is a refiner.
\end{lem}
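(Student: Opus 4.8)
The plan is to verify the two defining conditions of an $M$-refiner from \cref{def:refine} for the map $\Fixed_M$, after first pinning down the ordering issue flagged in the discussion preceding the lemma. Condition (a) is almost immediate: by \cref{def:genrefine}, $\Fixed_M(P)$ is the meet of $P$ with some ordered orbit partition of $M_0$, and by definition of the meet, $P \wedge R \preccurlyeq P$ for any $R$, so $\Fixed_M(P) \preccurlyeq P$. The real content is condition (b), namely that $\Fixed_M(P^g) = {\Fixed_M(P)}^g$ for all $g \in M$, and the main obstacle is that $\Fixed_M$ as stated is not well-defined as a map until we specify how the orbits of $M_0$ are ordered; the meet-of-ordered-partitions construction is sensitive to this ordering, so I must fix it in a way that is simultaneously canonical and equivariant.

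First I would make the construction deterministic. Given $P$, the singleton cells $\alpha_1,\dots,\alpha_k$ come with an induced order from the positions of their cells in $P$, so $M_0$, the pointwise stabilizer of $\alpha_1,\dots,\alpha_k$ in $M$, is determined by $P$. To order the orbits of $M_0$ canonically, I would order them by their minimal elements (say, the least element of each orbit under the natural order on $\Omega = \{1,\dots,n\}$); call the resulting ordered orbit partition $R(M_0)$. With this convention, $\Fixed_M(P) := P \wedge R(M_0)$ is a genuine, well-defined map $\OPart(\Omega)\to\OPart(\Omega)$. This matches the remark that in practice the ordering is fixed once and reused, but for the proof a canonical choice is cleaner.

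The key step for condition (b) is then to track how each ingredient transforms under $g \in M$. Applying $g$ to $P$ permutes the singleton cells and replaces the set of fixed points $\{\alpha_1,\dots,\alpha_k\}$ of $P$ by $\{\alpha_1^g,\dots,\alpha_k^g\}$, which are exactly the fixed points of $P^g$. A short conjugation argument shows that the pointwise stabilizer of $\alpha_1^g,\dots,\alpha_k^g$ in $M$ equals $M_0^g = g^{-1} M_0 g$; here I use crucially that $g \in M$, so conjugation stays inside $M$ and $M_0^g$ is again a subgroup of $M$. Consequently the orbits of this new stabilizer are precisely the images under $g$ of the orbits of $M_0$, i.e. $R(M_0^g)$ has the same cells as $R(M_0)^g$. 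The only subtlety is ordering: I need the canonical order I chose to be preserved, that is $R(M_0^g) = R(M_0)^g$ as \emph{ordered} partitions. This is where a plain ``order by least element'' convention can fail, since $g$ need not respect the order on $\Omega$, and this is the point I expect to require the most care.

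To finish cleanly I would resolve the ordering subtlety by observing that the final meet already re-sorts its cells lexicographically by the pair of cell-indices (as in the definition of meet), so the internal order of $R(M_0)$ does not survive into the output in an uncontrolled way; what matters is only the partition-into-cells of $R(M_0)$ together with the order of $P$'s cells. Since the unordered collection of $M_0$-orbits maps under $g$ to the unordered collection of $M_0^g$-orbits, and since \cref{lem:meetandact} gives $(P \wedge R)^g = P^g \wedge R^g$ for the meet with matching orderings, I would argue that both $\Fixed_M(P^g)$ and ${\Fixed_M(P)}^g$ produce the same cells, and that the lexicographic re-sorting step in the meet assigns these cells the same final order in both cases. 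Combining these, $\Fixed_M(P^g) = {\Fixed_M(P)}^g$, which is condition (b). Together with condition (a), this shows $\Fixed_M$ is a refiner.
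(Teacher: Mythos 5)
Your handling of condition (a) and your identification of the ordering subtlety are both correct, but your resolution of that subtlety contains a genuine error. You claim that because the meet re-sorts its cells lexicographically by pairs of cell indices, ``the internal order of $R(M_0)$ does not survive into the output in an uncontrolled way.'' This is false: when two sub-cells of $P \wedge R$ arise from the \emph{same} cell of $P$ but different cells of $R$, their relative order in the meet is decided precisely by the order of the corresponding cells of $R$. Concretely, take $\Omega=\{1,\dots,6\}$, $M=\langle (1\,2),(3\,4),(1\,3)(2\,4)(5\,6)\rangle$, $P=[5 \mid 1,2,3,4,6]$ and $g=(1\,3)(2\,4)(5\,6)\in M$. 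Here $M_0=M_5=\langle(1\,2),(3\,4)\rangle$ and the pointwise stabilizer relevant to $P^g$ is $M_6=M_0$ again, so your least-element convention yields $R=[1,2\mid 3,4\mid 5\mid 6]$ in both cases. One then computes $\Fixed_M(P^g)=[6\mid 1,2\mid 3,4\mid 5]$ but ${\Fixed_M(P)}^g=[5\mid 1,2\mid 3,4\mid 6]^g=[6\mid 3,4\mid 1,2\mid 5]$, which differ in the order of the cells $\{1,2\}$ and $\{3,4\}$. So with any $g$-independent canonical ordering of the orbits (such as by least element), condition (b) genuinely fails, and the lexicographic re-sorting in the meet does not rescue it.

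The paper resolves this differently, and you should adopt its device: do not order the orbits of the stabilizer canonically in terms of $\Omega$, but rather fix an ordered orbit partition $Q$ for one representative stabilizer $M_0$ arbitrarily, and then \emph{decree} that the ordered orbit partition used for the conjugate stabilizer $M_0^g$ is $Q^g$ (this is what the paper means by ``fixing this ordering for later instances''). With that convention the computation is exactly ${\Fixed_M(P)}^g=(P\wedge Q)^g=P^g\wedge Q^g=\Fixed_M(P^g)$ by Lemma~\ref{lem:meetandact}, and equivariance holds by construction. The rest of your argument --- that the fixed points of $P^g$ are the $g$-images of those of $P$, that the relevant stabilizer is $M_0^g$ (using $g\in M$), and that its orbits are the $g$-images of the orbits of $M_0$ --- is correct and matches the paper.
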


\begin{proof}
  Let $P \in\OPart(\Omega)$. Then $\Fixed_M(P) \preccurlyeq P$,
  because $\Fixed_M(P)$ is the meet of $P$ with another ordered partition and it is
  therefore finer than $P$.

  Let $k \in \N$ and $\alpha_1,\ldots,\alpha_k \in \Omega$ be such that these are
  precisely the elements in singleton cells of $P$. Now we let \(M_0\) denote the point-wise stabilizer of $\alpha_1,\ldots,\alpha_k$ in $M$ and we note that $F:=\la M_0 \ra$
  stabilizes $\alpha_1,\ldots,\alpha_k$.
  Let $g \in G$. We need to show that $\Fixed_M(P^g) = {\Fixed_M(P)}^g$.
  First we note that $F^g$ fixes $\alpha_1^g,\ldots,\alpha_k^g$, which are
  exactly the singleton cells of $P^g$.
  Now we fix some ordered orbit partition $Q$ of $F$ as described before the lemma. Then $Q^g$ is an ordered orbit partition of $F^g$. Using Lemma~\ref{lem:meetandact} we deduce:
  \begin{align*} {\Fixed_M(P)}^g &= {(P\wedge Q)}^g
                                 = P^g\wedge{Q}^g
                                 = \Fixed_M(P^g).
  \end{align*}
\end{proof}

The major limitation of $\Fixed$ is that it ignores non-singleton cells. More
concretely, given a transitive group \(G\) and an ordered partition \(P\) that
contains no singleton cells, \(\Fixed_G(P) = P\). We cannot easily use the same
strategy as \(\Fixed\) for non-singleton cells (finding the stabilizer of the
non-singleton cells in \(G\)), because this would require solving the set
stabilizer problem, which is exactly one of the problems that is solved via backtrack!

Instead, we look at other properties of groups we can use, which allow us to
refine non-singleton cells efficiently. We will now show how orbital graphs can
be used in refiners that complement existing refiners for groups
expressed by a list of generators. These refiners will provide useful refinement
even for transitive groups and non-singleton cells.
We are not the first ones to use this idea:
\cite{Th} uses orbital graphs as an ingredient for refiners for normalizer search.
However, our work does not build on his -- partly because our hypothesis is more general, and partly because his results have not been published except for in his PhD thesis. We show
that the concept has not yet been fully exploited.

\section{Orbital graphs}\label{sec:orbrefiner}

Here we introduce the graphs that we use in our new refiners -- orbital graphs -- and prove the properties that are necessary in order to decide whether or not a refinement by orbital graphs is computationally beneficial.

\begin{definition}[Digraphs]
For the purposes of this paper, a \textbf{digraph $\Gamma$} is a pair
$\Gamma=(V,A)$ where $V$ denotes the set of \textbf{vertices (or points)} and
$A$ denotes the set of \textbf{arcs}, i.e. directed edges. If $x,y \in V$, then
an arc from $x$ to $y$ in $\Gamma$ will be denoted by $(x,y)$.
An \textbf{isolated vertex} of a digraph is a vertex with no arcs going into it or coming out of it.
A digraph is \textbf{complete} if and only
if its set of arcs is exactly $\{(\omega_1,\omega_2) \mid \omega_1,\omega_2 \in \Omega,
\omega_1 \neq \omega_2\}$.

$\Gamma$ is a \textbf{complete bipartite digraph} if and only if there
exist disjoint subsets $S,E$ of vertices such that $V$ is the union of $S$ (the ``starting'' vertices) and $E$ (the ``end'' vertices) and the
set of arcs is exactly $A = \{(\omega_1,\omega_2) \mid \omega_1 \in S, \omega_2\in
E\}$.
\end{definition}

We refer the reader to \cite{Bang-Jensen:2008:DTA:1523254} for standard notation
and for the definitions of connected components, graph isomorphisms etc.
We point out that by a \textbf{proper digraph} we mean a digraph that has at least one arc such that its reverse arc is not in the graph.
All digraphs considered here have no multiple arcs and no loops. We also point out that whenever we refer to the \textbf{size of a connected component} we mean the number of vertices in the component.

\begin{definition}[Orbital Graphs]
Let  $\Omega$ be a finite set, $G:=\Sym(\Omega)$ and $H \le G$. 
For all vertices $\gamma \in \Gamma$ and all $h \in H$ we write $\gamma^h$ for the image of $\gamma$ under $h$ in the original permutation action.

Now let
$\alpha,\beta \in \Omega$ be distinct elements, chosen in this order. We define a
digraph $\Gamma=(\Omega,A)$ where the set of arcs $A$ is 
defined as $A:=\{(\alpha^h,\beta^h) \mid h \in H\}$.
This digraph is called the \textbf{orbital graph of $H$ with base-pair $(\alpha,\beta)$}, and is denoted by $\Gamma(H, \Omega, (\alpha, \beta))$.

Following \cite{dixon1996permutation} we say that an orbital graph is \textbf{self-paired} if and only if, for all $\gamma,\delta \in \Omega$, it is true that $(\gamma,\delta)$ is an arc if and only if $(\delta,\gamma)$ is an arc.
\end{definition}

\begin{ex}\label{orbex}
If we build an orbital graph for $\s_3$, then for each base-pair we obtain the
complete digraph on $\{1,2,3\}$. The reason is that this group is
$2$-transitive: Given $\alpha,\beta \in \{1,2,3\}$ such that $(\alpha,\beta)$ is a base-pair, and
given any distinct $\gamma,\delta \in \{1,2,3\}$, there exists some $g \in \s_3$ such that $\alpha^g=\gamma$
and $\beta^g=\delta$. Therefore $(\gamma,\delta)$ is also an arc, and this means that all possible
arcs exist.
For groups that are not $2$-transitive, the choice of the base-pair becomes much
more important. Let $H:=\langle (1\,2\,3),(4\,5),(4\,6)\rangle \le \s_6$.

Then, starting with the base-pair $(1,2)$, we obtain the following digraph:

\begin{center}
\begin{tikzpicture}[shorten >=1pt,->]
  \tikzstyle{vertex}=[circle,fill=black!25,minimum size=14pt,inner sep=0pt]

  \foreach \name/\angle/\text in {P-1/210/1, P-2/90/2, 
                                  P-3/-30/3}
  \node[vertex,xshift=-1.5cm] (\name) at (\angle:1cm) {$\text$};

  \foreach \name/\angle/\text in {Q-1/210/4, Q-2/90/5, 
                                  Q-3/-30/6}
  \node[vertex,xshift=1.5cm] (\name) at (\angle:1cm) {$\text$};

  \foreach \from/\to in {1/2,2/3,3/1}
    { \draw (P-\from) -- (P-\to); }
  \draw[very thick] (P-1) -- (P-2);
\end{tikzpicture}
\end{center}

But, starting with the base-pair $(5,6)$, we find:

\begin{center}
\begin{tikzpicture}[shorten >=1pt,->]
  \tikzstyle{vertex}=[circle,fill=black!25,minimum size=14pt,inner sep=0pt]

  \foreach \name/\angle/\text in {P-1/210/1, P-2/90/2, 
                                  P-3/-30/3}
  \node[vertex,xshift=-1.5cm] (\name) at (\angle:1cm) {$\text$};

  \foreach \name/\angle/\text in {Q-1/210/4, Q-2/90/5, 
                                  Q-3/-30/6}
  \node[vertex,xshift=1.5cm] (\name) at (\angle:1cm) {$\text$};


  \draw (Q-1.45) -- (Q-2.255);
  \draw (Q-1.345) -- (Q-3.195);
  \draw (Q-2.225) -- (Q-1.75);
  \draw[very thick] (Q-2.285) -- (Q-3.135);
  \draw (Q-3.105) -- (Q-2.315);
  \draw (Q-3.165) -- (Q-1.15);
\end{tikzpicture}
\end{center}

For a connected digraph we start with the pair
$(2,4)$:

\begin{center}
\begin{tikzpicture}[align=center,shorten >=1pt,->]
  \tikzstyle{vertex}=[circle,fill=black!25,minimum size=14pt,inner sep=0pt]

  \foreach \name/\x/\y in {1/1/2.5, 2/2.5/2.5, 3/4/2.5, 4/1/1, 5/2.5/1, 6/4/1}
  \node[vertex] (\name) at (\x,\y) {$\name$};

  \foreach \from/\to in {1/4, 1/5, 1/6, 2/4, 2/5, 2/6, 3/4, 3/5, 3/6}
    { \draw (\from) -- (\to); }
  \draw[very thick] (2) -- (4);
\end{tikzpicture}
\end{center}

\end{ex}

Some properties of orbital graphs can be found in \cite{C} and \cite{dixon1996permutation}, but we decided to include short proofs for the statements in the next lemma in order to make this article more self-contained.

\begin{hyp}\label{orbhyp}
Let $\Omega$ be a finite set, let $H \le G:=\Sym(\Omega)$ and let $\alpha,\beta \in \Omega$ be distinct. Let
$\Gamma:=\Gamma(H, \Omega,(\alpha,\beta))$ and let $A$ denote the set of arcs of $\Gamma$.
\end{hyp}

\begin{lem}\label{basic}
Suppose that Hypothesis \ref{orbhyp} holds.
Then we have the following:

\begin{enumerate}
\item[(i)]
$\Gamma=\Gamma(H,\Omega,(\gamma,\delta))$ if and only if $(\gamma,\delta) \in A$.

\item[(ii)]
$\Gamma$ is self-paired if and only if some $h \in H$ interchanges $\alpha$ and $\beta$.

\item[(iii)]
$\alpha^H$ is precisely the set of vertices of $\Gamma$ that are the starting point of some arc.

\item[(iv)]
$\beta^H$ is precisely the set of vertices of $\Gamma$ that are the end point of some arc.

\item[(v)]
The number of arcs starting at $\alpha$ is $|\beta^{H_\alpha}|$ and
the number of arcs going into $\beta$ is
$|\alpha^{H_\beta}|$.

\end{enumerate}
\end{lem}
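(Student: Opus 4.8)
The plan is to build everything on a single unifying observation: the arc set $A = \{(\alpha^h, \beta^h) : h \in H\}$ is precisely the orbit of the ordered pair $(\alpha, \beta)$ under the natural diagonal action of $H$ on $\Omega \times \Omega$. Each part of the lemma then reduces to an elementary fact about this orbit and the projections of its members onto their two coordinates.

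For (i) I would argue both directions. If $(\gamma, \delta) \in A$, write $(\gamma, \delta) = (\alpha^{h_0}, \beta^{h_0})$ for some $h_0 \in H$; then the arc set of $\Gamma(H, \Omega, (\gamma, \delta))$ is $\{(\gamma^h, \delta^h) : h \in H\} = \{(\alpha^{h_0 h}, \beta^{h_0 h}) : h \in H\}$, and since $h \mapsto h_0 h$ permutes $H$ (as $h_0 H = H$), this set equals $A$. Conversely, taking the identity of $H$ shows that $(\gamma, \delta)$ is always an arc of $\Gamma(H, \Omega, (\gamma, \delta))$, so if that graph equals $\Gamma$ then $(\gamma, \delta) \in A$. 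Part (ii) then follows quickly from (i): if some $h$ interchanges $\alpha$ and $\beta$ then $(\beta, \alpha) = (\alpha^h, \beta^h) \in A$, so by (i) the reversed pair $(\beta, \alpha)$ generates the same graph, and comparing the two descriptions $A = \{(\alpha^h, \beta^h)\}$ and $A = \{(\beta^h, \alpha^h)\}$ shows $A$ is closed under reversing arcs, i.e. $\Gamma$ is self-paired. For the converse, self-pairing applied to the arc $(\alpha, \beta)$ (present via the identity) forces $(\beta, \alpha) \in A$, meaning some $h \in H$ sends $\alpha \mapsto \beta$ and $\beta \mapsto \alpha$.

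Parts (iii)--(v) I would obtain by projecting onto coordinates. A vertex is the start of some arc if and only if it has the form $\alpha^h$, which gives the orbit $\alpha^H$ for (iii), and symmetrically $\beta^H$ for (iv). For (v), an arc starting at $\alpha$ is a pair $(\alpha, \beta^h)$ with $\alpha^h = \alpha$, i.e. $h \in H_\alpha$; since the orbital graph has no multiple arcs, such arcs are in bijection with their endpoints $\beta^h \in \beta^{H_\alpha}$, giving the count $|\beta^{H_\alpha}|$, and the dual statement for arcs into $\beta$ is identical with the roles of the two coordinates exchanged.

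The only genuine subtlety, and the step I would treat most carefully, is the equality of arc sets in (i): one must check that replacing the base-pair by another arc neither shrinks nor enlarges $A$, which hinges on the elementary but essential fact that left multiplication by $h_0 \in H$ is a bijection of $H$ onto itself. Everything else is a direct unwinding of the orbit description of $A$ together with the simplicity of the orbital graph (no loops and no multiple arcs).
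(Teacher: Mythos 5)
Your proposal is correct and follows essentially the same route as the paper: both treat $A$ as the $H$-orbit of the ordered pair $(\alpha,\beta)$ under the diagonal action and derive each part by unwinding that description (with (ii) reduced to (i) via the arc $(\beta,\alpha)$, and (v) obtained by restricting to $h\in H_\alpha$ or $h\in H_\beta$). Your explicit left-multiplication argument in (i) merely spells out a step the paper states without comment; there is no substantive difference.
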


\begin{proof}
(i) If $\Gamma=\Gamma(H,\Omega,(\gamma,\delta))$, then by definition $(\gamma,\delta)$ is an arc in $\Gamma$.

Conversely, suppose that $(\gamma,\delta)$ is an arc in $\Gamma$. Then there exists some $h \in H$ such that
$(\alpha^h,\beta^h)=(\gamma, \delta)$. Hence the orbital graph with base-pair $(\alpha,\beta)$ is the same as the orbital graph with base-pair $(\gamma, \delta)$.

(ii)
By (i) $\Gamma$ coincides with $\Gamma(H,\Omega, (\beta, \alpha))$ if and only if the arc $(\beta,\alpha)$ exists in $\Gamma$, which happens if and only if
there exists some $h \in H$ such that $\alpha^h=\beta$ and $\beta^h=\alpha$.

(iii)
Let $\gamma \in \alpha^H$ and $h \in H$ be such that $\gamma=\alpha^h$.
Then $(\gamma,\beta^h)$ is an arc with starting point $\gamma$.
Conversely, if $\delta \in \Omega$ is such that $(\gamma,\delta)$ is an arc in $\Gamma$, then there exists some $h \in H$ such that $(\alpha^h,\beta^h)=(\gamma,\delta)$ and hence $\gamma=\alpha^h \in \alpha^H$.
Similar arguments show (iv).

(v) The number of arcs starting at $\alpha$ is

$|\{(\alpha,\gamma) \mid \gamma \in \Omega\}|=|\{(\alpha^h,\beta^h) \mid h \in H, \alpha^h=\alpha\}|=|\beta^{H_\alpha}|$ and the number of arcs going into $\beta$ is

$|\{(\delta,\beta) \mid \delta \in \Omega\}|=|\{(\alpha^h,\beta^h) \mid h \in H, \beta^h=\beta\}|=|\alpha^{H_\beta}|.$\\
\end{proof}

\begin{rem}\label{rem} Some comments:

(a) Parts (iii) and (v) of the lemma, together, give the total number of arcs in $\Gamma$.
The number of arcs starting at $\alpha$ is exactly $|\beta^{H_\alpha}|$, so we obtain
 $|A|=|\alpha^H| \cdot |\beta^{H_\alpha}|$.

(b)
In (ii) it is not true that $H$ must contain the transposition $(\alpha, \beta)$.
A counterexample is provided by
$H:=\la (1\,2)(3\,4)\ra \le \s_4$ acting naturally on $\Omega:=\{1,2,3,4\}$ and its orbital graph
with base-pair $(1,2)$.

(c)
Parts (iii) and (iv) of the lemma imply that, if $H$ acts transitively on $\Omega$, then $\Gamma$ has no isolated vertices.
\end{rem}

\begin{lem}\label{act}
Suppose that Hypothesis \ref{orbhyp} holds. Then $H$ acts on $\Gamma$ as a group of graph automorphisms.
\end{lem}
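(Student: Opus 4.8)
The plan is to show that each $h \in H$ permutes the arc set $A$ among itself, i.e. that $h$ induces a graph automorphism of $\Gamma = \Gamma(H,\Omega,(\alpha,\beta))$, and that the map $h \mapsto (\text{action on } \Gamma)$ is a homomorphism from $H$ into the automorphism group of $\Gamma$. Since the vertex set of $\Gamma$ is simply $\Omega$, each $h \in H$ already acts on the vertices by the original permutation action, so the only thing requiring proof is that this vertex action preserves the arc relation in both directions.

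First I would fix $h \in H$ and show that $h$ maps arcs to arcs. By definition, a typical arc of $\Gamma$ has the form $(\alpha^{h'}, \beta^{h'})$ for some $h' \in H$. Applying $h$ to both endpoints yields the pair $\bigl((\alpha^{h'})^h, (\beta^{h'})^h\bigr) = (\alpha^{h'h}, \beta^{h'h})$. Since $H$ is a group, $h'h \in H$, so this pair is again of the form $(\alpha^{h''}, \beta^{h''})$ with $h'' = h'h \in H$, hence it lies in $A$. This shows $h$ sends arcs to arcs. For the converse direction — that the preimage of an arc is an arc — I would run the identical argument using $h^{-1} \in H$, which shows $(x,y) \in A$ implies $(x^{h^{-1}}, y^{h^{-1}}) \in A$, so that $h$ reflects arcs as well. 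Together these give that $h$ is a bijection of $\Omega$ preserving the arc relation exactly, i.e. a graph automorphism of $\Gamma$.

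Finally I would note that the assignment sending $h$ to its action on $\Gamma$ is a homomorphism, because the action on vertices is just the restriction of the given action of $H$ on $\Omega$, and composition of permutations is respected; thus $H$ acts on $\Gamma$ as a group of graph automorphisms, as claimed.

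I do not expect a serious obstacle here: the result is essentially the standard fact that the defining group of an orbital graph acts by automorphisms, and the proof is a one-line orbit computation using closure of $H$ under multiplication and inverses. The only point requiring minor care is to verify the arc relation is preserved in \emph{both} directions rather than just checking arcs map to arcs; using $h^{-1} \in H$ handles this cleanly, and no appeal to the finiteness of $\Omega$ or a counting argument (as was needed in the equality step of Lemma~\ref{lem:meetandact}) is necessary.
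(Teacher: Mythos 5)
Your proof is correct and follows essentially the same route as the paper's: both show that an arc $(\alpha^{h'},\beta^{h'})$ is sent by $h$ to the arc $(\alpha^{h'h},\beta^{h'h})$, and both handle the reverse direction via $h^{-1}$ (the paper phrases this by writing a given arc $(\gamma^h,\delta^h)$ as $(\alpha^a,\beta^a)$ and pulling back by $h^{-1}$, which is the same computation). Your closing remark that the homomorphism property is inherited from the permutation action of $H$ on $\Omega$ matches the paper's observation that the induced maps are bijective because $H$ is a group.
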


\begin{proof}
First we note that $H$ acts faithfully on the set $\Omega$.
Now we let $\gamma,\delta \in \Omega$.

If $(\gamma,\delta)$ is an arc, then there exists some $h \in H$ such that $(\gamma,\delta)=(\alpha^h,\beta^h)$ by definition of $\Gamma$.
Hence $(\gamma^g,\delta^g)=(\alpha^{hg},\beta^{hg})$ is an arc.
Conversely, if $(\gamma^h,\delta^h)$ is an arc, then there exists some $a \in H$ such that $(\gamma^h,\delta^h)=(\alpha^a,\beta^a)$ and hence $(\gamma,\delta)=(\alpha^{ah^{-1}},\beta^{ah^{-1}})$ is an arc.
As $H$ is a group, the induced maps are bijective and hence
every $h \in H$ induces a graph automorphism on $\Gamma$.
\end{proof}

\begin{lem}\label{transimp}
Suppose that Hypothesis \ref{orbhyp} holds and let \(\Delta\) denote the connected component that contains \((\alpha, \beta)\).
Then every connected component of \(\Gamma\) that has size at least $2$ is isomorphic to \(\Delta\).
\end{lem}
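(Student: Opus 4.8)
The plan is to exploit the fact that $H$ acts on $\Gamma$ by graph automorphisms (Lemma~\ref{act}) together with the fact that, by construction of the orbital graph, $H$ acts transitively on the arc set $A$. A graph automorphism permutes the connected components of $\Gamma$ and carries each component isomorphically onto its image, so it suffices to produce, for each component $C$ of size at least $2$, an element of $H$ that maps $\Delta$ onto $C$.

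First I would reduce to the existence of an arc inside $C$. A connected component $C$ of size at least $2$ must contain at least one arc: a component consisting of a single vertex has size $1$, and any two distinct vertices lying in the same component are joined (in the underlying undirected sense) by a path, which requires at least one arc. So I fix an arc $(\gamma,\delta)$ with $\gamma,\delta \in C$.

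Next, since $(\gamma,\delta) \in A$, the definition of the orbital graph supplies some $h \in H$ with $(\alpha^h,\beta^h) = (\gamma,\delta)$; equivalently $\Gamma = \Gamma(H,\Omega,(\gamma,\delta))$ by Lemma~\ref{basic}(i). By Lemma~\ref{act}, this $h$ acts as a graph automorphism of $\Gamma$. Because $h$ sends the vertex $\alpha$ to $\gamma$, it sends the connected component $\Delta$ containing $\alpha$ (and hence $\beta$ and the arc $(\alpha,\beta)$) onto the connected component containing $\gamma$, which is exactly $C$. Restricting $h$ to $\Delta$ then yields a graph isomorphism $\Delta \to C$, so $C \cong \Delta$, as required.

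I expect the only point needing care to be the reduction in the second step, namely verifying that every component of size at least $2$ genuinely contains an arc, so that transitivity of $H$ on arcs can be invoked; once an arc is in hand, the conclusion follows immediately from the automorphism action, and no part of the argument is computationally heavy.
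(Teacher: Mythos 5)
Your proposal is correct and follows essentially the same route as the paper's own proof: pick an arc in the given component, use transitivity of $H$ on the arc set to find $h \in H$ carrying $(\alpha,\beta)$ to that arc, and invoke Lemma~\ref{act} to conclude that $h$ restricts to an isomorphism between the components. The only difference is that you explicitly justify that a component of size at least $2$ contains an arc, a point the paper leaves implicit.
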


\begin{proof}
Let \({\Delta'}\) denote an arbitrary connected component of \(\Gamma\) of size at least $2$ and let \((\gamma,\delta)\) be an arc in $\Delta'$.

From the definition of orbital graphs let \(h\in H\) be such that \((\alpha^h,\beta^h)=(\gamma,\delta)\). Then \(h\) induces an automorphism on \(\Gamma\) by Lemma \ref{act} and it moves all arcs from \(\Delta\) to arcs in \(\Delta'\).
Conversely, \(h^{-1}\) induces an automorphism on \(\Gamma\) that moves all arcs of \(\Delta'\) into \(\Delta\). Thus it follows that \(\Delta\) and \(\Delta'\) are isomorphic as graphs.
\end{proof}

Lemma \ref{lem:orbitalorbits} shows how to choose a set of base-pairs that determines all orbital graphs for a group \(H\). Parts (i) and (ii) show to take a representative from each orbit of \(H\) as the first element of the base-pair, and then part (iii) shows that we must stabilize this representative in \(H\), and take a representative from each orbit in this stabilizer for the second element of our base-pair. These base-pairs will allow us to analyze the set of orbital graphs of a group, before we construct any orbital graphs explicitly.

\begin{lem}\label{lem:orbitalorbits}
Let $\Omega$ be a finite set and let $H \le G:=\Sym(\Omega)$.

\begin{enumerate}[(i)]
\item  Suppose that $\alpha,\beta \in \Omega$ and $\alpha \in \beta^H$. Then the set of orbital graphs
  of $H$ with base-pairs starting with $\alpha$ is equal to the set of orbital
  graphs of $H$ with base-pairs starting with $\beta$.

\item Suppose that $\alpha,\gamma \in \Omega$ and $\alpha \notin \gamma^H$. Then the set of orbital
  graphs of $H$ with base-pairs starting with $\alpha$ is disjoint from the set of
  orbital graphs of $H$ with base-pairs starting with $\gamma$.

\item Suppose that $\alpha,\beta,\gamma \in \Omega$ and that $\alpha \neq \beta$, $\alpha \neq \gamma$. Let $\Gamma_1:=\Gamma(H,\Omega,(\alpha,\beta))$ and $\Gamma_2:=\Gamma(H,\Omega,(\alpha,\gamma))$.
Then $\Gamma_1=\Gamma_2$ if and only if
 $\gamma \in \beta^{H_\alpha}$.
\end{enumerate}
\end{lem}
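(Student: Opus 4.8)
The plan is to prove the three parts largely using Lemma~\ref{basic}, which already packages most of the needed structural facts about orbital graphs, together with elementary orbit-stabilizer reasoning. The key observation connecting everything is part~(iii) of Lemma~\ref{basic}: the set of vertices that start an arc of an orbital graph with base-pair $(\alpha,\beta)$ is exactly $\alpha^H$. This means the ``starting orbit'' $\alpha^H$ is an invariant of the graph, so any two orbital graphs with base-pairs starting in \emph{different} $H$-orbits must differ, while base-pairs starting in the \emph{same} orbit have a chance to coincide. This single fact does most of the work for (i) and (ii).

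For part~(i), suppose $\alpha \in \beta^H$, say $\alpha = \beta^g$ for some $g \in H$. I would show the two \emph{sets} of orbital graphs coincide by a symmetric containment argument. Given any orbital graph $\Gamma(H,\Omega,(\alpha,\delta))$ starting with $\alpha$, I apply $g^{-1}$: since $g^{-1} \in H$, by Lemma~\ref{basic}(i) the arc $(\alpha^{g^{-1}},\delta^{g^{-1}}) = (\beta,\delta^{g^{-1}})$ lies in the graph, so the graph equals $\Gamma(H,\Omega,(\beta,\delta^{g^{-1}}))$, which starts with $\beta$. The reverse inclusion is identical using $g$. Hence the two sets are equal. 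Part~(ii) is the contrapositive flavour: if $\alpha \notin \gamma^H$, then by Lemma~\ref{basic}(iii) every graph starting with $\alpha$ has starting-vertex set $\alpha^H$, while every graph starting with $\gamma$ has starting-vertex set $\gamma^H$, and these two orbits are disjoint (distinct $H$-orbits partition $\Omega$). Two graphs with different starting-vertex sets cannot be equal, so the sets are disjoint.

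For part~(iii), fix $\alpha$ and consider $\Gamma_1 = \Gamma(H,\Omega,(\alpha,\beta))$ and $\Gamma_2 = \Gamma(H,\Omega,(\alpha,\gamma))$. By Lemma~\ref{basic}(i), $\Gamma_1 = \Gamma_2$ if and only if the arc $(\alpha,\gamma)$ belongs to $\Gamma_1$, i.e.\ if and only if there exists $h \in H$ with $(\alpha^h,\beta^h) = (\alpha,\gamma)$. The condition $\alpha^h = \alpha$ says exactly $h \in H_\alpha$, and then $\beta^h = \gamma$ says $\gamma \in \beta^{H_\alpha}$. So $\Gamma_1 = \Gamma_2$ holds precisely when $\gamma \in \beta^{H_\alpha}$, which is the claim. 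I should note in passing that the hypotheses $\alpha \neq \beta$ and $\alpha \neq \gamma$ are exactly what is needed for $(\alpha,\beta)$ and $(\alpha,\gamma)$ to be legitimate base-pairs.

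The arguments are short and the main obstacle is purely one of \emph{bookkeeping}: being careful to distinguish ``equal as a specific graph'' from ``belongs to the set of graphs starting at a given point,'' and keeping straight whether one is varying the first or second coordinate of the base-pair. The only genuinely load-bearing input is Lemma~\ref{basic}(i) (an orbital graph is determined by, and recoverable from, \emph{any} of its arcs) together with Lemma~\ref{basic}(iii) (the starting-vertex set is the orbit $\alpha^H$); once these are invoked, each part reduces to a one-line orbit-stabilizer computation, so I do not anticipate any real difficulty beyond stating the translations between ``arc exists'' and ``element of $H$ realises this pair'' precisely.
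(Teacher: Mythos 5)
Your proposal is correct and takes essentially the same approach as the paper's proof: all three parts are derived from Lemma~\ref{basic} together with short orbit--stabilizer computations, and your part (iii) is line-for-line the paper's argument. The only cosmetic difference is in part (ii), where you use the starting-vertex-set invariant from Lemma~\ref{basic}(iii) while the paper applies Lemma~\ref{basic}(i) directly; both are immediate consequences of the same facts.
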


\begin{proof}
\begin{enumerate}[(i)]
\item Let $h \in H$ be such that $\alpha^h=\beta$. Then for all $\gamma \in \Omega$,
  it follows that $\Gamma(H,\Omega,(\alpha,\gamma))=\Gamma(H,\Omega,(\alpha^h,\gamma^h)) = \Gamma(H,\Omega,(\beta,\gamma^h))$. Conversely $\Gamma(H,\Omega,(\beta,\gamma))=\Gamma(H,\Omega,(\beta^{(h^{-1})},
  \gamma^{(h^{-1})})) = \Gamma(H,\Omega,(\alpha,\gamma^{(h^{-1})})$.

\item Suppose that the pairs $(\alpha,\beta)$ and $(\gamma,\delta)$ generate the same orbital graph
  of $H$. Then by Lemma \ref{basic}~(i) there is $h \in H$ such that $(\alpha^h,\beta^h) = (\gamma,\delta)$, which
  implies that $\alpha \in \gamma^H$. This proves the statement.

\item If $\Gamma_1=\Gamma_2$, then $(\alpha,\beta)$ and $(\alpha,\gamma)$ generate the same orbital graph. So by Lemma \ref{basic}~(i) there is $h \in H$ such that $(\alpha^h,\beta^h)=(\alpha,\gamma)$. This means that $\alpha^h=\alpha$ and therefore
 $h \in H_\alpha$, which implies that $\gamma \in \beta^{H_\alpha}$. Conversely, if $\gamma \in \beta^{H_\alpha}$ then there
 exists $h \in H_\alpha$ such that $(\alpha^h,\beta^h)=(\alpha,\gamma)$ and hence $\Gamma_1=\Gamma_2$.
\end{enumerate}
\end{proof}

\subsection{Futile orbital graphs}

After the preparatory results above, we now characterize the situations where orbital graphs are beneficial in partition backtrack.

\begin{definition}[Futile orbital graph]
Suppose that Hypothesis \ref{orbhyp} holds and that $P$ is an ordered orbit
partition of $H$. We denote the stabilizer of $P$ in $G$ by $\Sym(P)$, as we did in Definition \ref{basicdef}, and we
emphasize that $\Sym(P)$ stabilizes every $H$-orbit (i.e. every cell of the ordered
partition $P$) as a set and that it acts as the full symmetric group on every
orbit.

We say that the orbital graph $\Gamma$ is \textbf{futile} if and only if $\Sym(P)$,
in its natural action on $\Omega$, induces graph automorphisms on
$\Gamma$.
\end{definition}

\begin{ex}
We refer to the graphs in Example \ref{orbex} for the group $H:=\langle (1\,2\,3),(4\,5),(4\,6)\rangle \le \s_6$, and we use the ordered orbit partition $P:=[1,2,3|4,5,6]$.
The first graph $\Gamma_1$, with base-pair $(1,2)$, is not futile. We observe
that the transposition $(1\,2) \in \s_6$ stabilizes the partition $P$, but it does not induce a graph automorphism on $\Gamma_1$ because $(1,2)$ is an arc in $\Gamma_1$ and $(2,1)$ is not. 

The second graph $\Gamma_2$, with base-pair $(5,6)$, is futile. 

For this we note that $\Sym(P)=\langle (1\,2),(2\,3),(4\,5),(4\,6)\rangle$.
Now if $g \in \Sym(P)$, then $g$ permutes the three isolated vertices in the graph and it permutes the set of vertices in the connected component containing $4,5$ and $6$.
Given that this connected component is a complete graph, it follows that $g$ induces a graph automorphism on this component and hence on all of $G$. 

Finally we look at the third graph $\Gamma_3$ with base-pair $(2,4)$.
This is a complete bipartite graph, so again we see that $\Sym(P)$ induces graph automorphisms.
\end{ex}

The intuition behind this definition is that we would like to be able to characterize orbital graphs where the graph structure does not give us any additional information compared to the orbit structure on $\Omega$ that comes from the action of $H$. 
In a futile orbital graph, all the information that could be gained from the graph structure can already be seen in an ordered orbit partition of $\Omega$ with respect to $H$. Building such a graph would be useless from a computational perspective.
Therefore our main theoretical result on this topic classifies futile orbital graphs. We also discuss how to detect futile orbital graphs without even building them. 

We note that the following
result does not place any restrictions about the number of orbits of $H$ on
$\Omega$. In particular there could be arbitrarily many isolated points in
$\Gamma$. 

\begin{thm}\label{use}
Suppose that Hypothesis \ref{orbhyp} holds.
Then $\Gamma$ is futile if and only if it has a unique connected component
$\Delta$ of size at least $2$ and moreover one of the following holds:

(a) $\Delta$ is a complete bipartite digraph or 

(b) $\Delta$ is a complete digraph.
\end{thm}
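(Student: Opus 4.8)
The plan is to reduce everything to understanding how $\Sym(P)$ permutes the arc set $A$, exploiting the key structural fact that $\Sym(P)$ contains the full symmetric group on each orbit of $H$. The starting point is Lemma~\ref{basic}(iii) and~(iv): the vertices that are the starting point of some arc are exactly $\alpha^H$, the vertices that are the endpoint of some arc are exactly $\beta^H$, and every other vertex is isolated. Since $\Sym(P)$ fixes each $H$-orbit setwise and acts on it as the full symmetric group, its behaviour on $A$ is governed entirely by where the single arc $(\alpha,\beta)$ may be sent. Futility, which says that every element of $\Sym(P)$ preserves $A$, therefore forces strong orbit-transitivity onto the arc set. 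I would run both directions of the equivalence together through a case split on whether $\alpha$ and $\beta$ lie in the same orbit of $H$.

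\textbf{Case 1} ($\alpha^H = \beta^H =: \mathcal{O}$). Here every arc both starts and ends in $\mathcal{O}$, so $A \subseteq \{(\gamma,\delta) : \gamma,\delta \in \mathcal{O},\ \gamma \neq \delta\}$ and all vertices outside $\mathcal{O}$ are isolated. If $\Gamma$ is futile then $\Sym(\mathcal{O}) \le \Sym(P)$ preserves $A$; applying elements of $\Sym(\mathcal{O})$ to the arc $(\alpha,\beta)$ sends it to every ordered pair of distinct points of $\mathcal{O}$, so $A$ is precisely the complete digraph on $\mathcal{O}$, which is connected, has size at least $2$, and is the unique nontrivial component. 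Conversely, any permutation fixing $\mathcal{O}$ setwise is an automorphism of a complete digraph on $\mathcal{O}$, and isolated vertices impose no constraints, so $\Sym(P)$ induces automorphisms. This yields alternative (b).

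\textbf{Case 2} ($\alpha^H \neq \beta^H$). Then $\mathcal{O}_1 := \alpha^H$ and $\mathcal{O}_2 := \beta^H$ are disjoint, every arc runs from $\mathcal{O}_1$ to $\mathcal{O}_2$, and $A \subseteq \mathcal{O}_1 \times \mathcal{O}_2$. Futility gives that $\Sym(\mathcal{O}_1) \times \Sym(\mathcal{O}_2) \le \Sym(P)$ preserves $A$, and sending $(\alpha,\beta)$ by these elements forces $A = \mathcal{O}_1 \times \mathcal{O}_2$, the complete bipartite digraph with parts $\mathcal{O}_1$ and $\mathcal{O}_2$; this is again a single nontrivial component, with all remaining vertices isolated. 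The converse is immediate, since any element of $\Sym(P)$ maps $\mathcal{O}_1 \times \mathcal{O}_2$ onto itself. This is alternative (a).

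The two implications are thus packaged together by the case analysis, and the uniqueness of the nontrivial component drops out automatically rather than requiring Lemma~\ref{transimp} as a separate input. The step I expect to need the most care is the bookkeeping that ties the combinatorial picture to the orbit structure: verifying that the start- and endpoint sets coincide \emph{exactly} with $\alpha^H$ and $\beta^H$, so that $A$ is genuinely confined to $\mathcal{O}$ (respectively $\mathcal{O}_1 \times \mathcal{O}_2$) and everything else really is isolated, and being careful that "$\Sym(P)$ induces automorphisms" is checked for \emph{all} of $\Sym(P)$, not merely generating transpositions. A minor point to dispatch is degenerate orbit sizes, such as a singleton orbit $\alpha^H = \{\alpha\}$, which still conforms to the complete bipartite description.
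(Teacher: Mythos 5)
Your proposal is correct, and it reorganizes the argument in a way that differs from the paper's proof in one substantive respect: the case split. The paper splits on whether $\Gamma$ is a proper digraph or self-paired, and in the proper case it must first prove, via transpositions in $\Sym(P)$, that no two points of the same $H$-orbit are joined by an arc (so that $\alpha^H \neq \beta^H$) before it can assemble the bipartite structure. You instead split on whether $\alpha^H = \beta^H$, which makes the containments $A \subseteq \{(\gamma,\delta): \gamma,\delta\in\mathcal{O},\ \gamma\neq\delta\}$ and $A \subseteq \mathcal{O}_1\times\mathcal{O}_2$ immediate from Lemma~\ref{basic}(iii),(iv), and then a single appeal to the transitivity of $\Sym(\mathcal{O})$ (respectively $\Sym(\mathcal{O}_1)\times\Sym(\mathcal{O}_2)$) on the set of candidate arcs finishes the forward direction; the underlying mechanism --- letting the full symmetric groups on the orbits, which sit inside $\Sym(P)$, sweep the arc $(\alpha,\beta)$ over all admissible positions --- is the same as the paper's, but your packaging eliminates the paper's preliminary steps (1) and (2) and, as you note, makes uniqueness of the nontrivial component fall out rather than being argued separately. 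The points you flag as needing care are exactly the right ones: in the converse direction for alternative (a) you must identify the two parts of the bipartition with $\alpha^H$ and $\beta^H$ (the paper does this via ``$\alpha^H\subseteq S$ and the bipartite structure implies $\alpha^H=S$''), so that $\Sym(P)$ visibly stabilizes each part; and the check that \emph{all} of $\Sym(P)$, not just the transpositions used in the forward direction, induces automorphisms is needed and is supplied by the completeness of $\Delta$. Neither approach is stronger than the other; yours is slightly more economical, the paper's tracks the proper/self-paired dichotomy that reappears elsewhere in the section.
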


\begin{proof}
Let $P$ be an ordered orbit partition of $H$. Then $\Sym(P)$ acts on the set of orbits of
$H$ and it acts faithfully on the set of vertices of $\Gamma$. Hence to answer the question
whether $\Gamma$ is futile or not, we only have to consider arcs in $\Gamma$.
 
Throughout the proof let $\Delta$ denote a connected component of size at least $2$, and without loss suppose that the arc $(\alpha,\beta)$ is contained in $\Delta$.

We split our proof into two cases depending on whether or not $\Gamma$ is a
proper digraph. In both cases we begin by proving that the futility of $\Gamma$ implies that $\Delta$ is the unique connected component of size at least $2$ and that (a) or (b) hold, and then we discuss the converse.\\

\textbf{Case 1:} $\Gamma$ is a proper digraph.

Then $\Gamma$ is not self-paired and Lemma \ref{basic}~(i) and (ii) imply that,
for all $\omega_1, \omega_2 \in \Omega$, there is at most one arc between them.
In the following arguments we will often refer to Lemma \ref{basic}~(iii) and
(iv) as well.

We suppose that $\Gamma$ is futile and we prove in a series of little steps that $\Delta$ is the unique connected component of size at least $2$ and that (a) is true.

\textbf{(1)} Suppose that $\gamma,\delta \in \Omega$ are distinct and in the
same $H$-orbit. Then they are not on an arc. In particular $\alpha^H \neq
\beta^H$.
\smallskip

\begin{roof}
As $\gamma$ and $\delta$ are in the same $H$-orbit, they lie in the same cell of
the partition $P$. It follows from the futility of $\Gamma$ that
the transposition $(\gamma,\delta) \in \Sym(\Omega)$, which stabilizes $P$,
induces a graph automorphism on $\Gamma$. Therefore neither $(\gamma,\delta)$
nor $(\delta,\gamma)$ is an arc. From this and the fact that $(\alpha, \beta)
\in A$ it follows that $\alpha^H \neq \beta^H$.
\end{roof}

\smallskip
\textbf{(2)} Suppose that $\omega \in \Omega$ is on an arc. Then it is either a
starting point or an end point, but not both.

\begin{roof}
This follows from Lemma \ref{basic}~(iii) and (1).
\end{roof}

Let $S:=\alpha^H$ and $E:=\beta^H$, and let $I \subseteq \Omega$ denote the set of isolated vertices of $\Gamma$.

\smallskip
\textbf{(3)} $\Omega=S \dot \cup E \dot \cup I$. Moreover $S \cup E$ spans $\Delta$, and $\Delta$
is a complete bipartite digraph.

\begin{roof}
The first statement follows from (2). Moreover there are no arcs between
vertices in $S$ or $E$, respectively, by (1). We show that all elements of $E$
are on an arc with $\alpha$:

For all $\gamma \in E$, we find the transposition $g:=(\beta,\gamma) \in \Sym(P)$,
and it fixes $\alpha^H$ point-wise by (1). The futility of $\Gamma$ implies that
$g$ maps the arc $(\alpha,\beta)$ to the arc $(\alpha,\gamma)$. Now it follows
that $A=S \times E$ and hence the digraph spanned by $S \cup E$ is a complete
bipartite digraph. Then it must coincide with $\Delta$ and $\Delta$ is the unique connected component of size at least $2$
of $\Gamma$.
\end{roof}

Conversely, we suppose that $\Delta$ is the unique connected component of size at
least $2$ of $\Gamma$ and that (a) holds. We prove that
$\Gamma$ is futile.

Let $S$ and $E$ denote the subsets of the vertex set of $\Gamma$ such that all
arcs start at $S$ and end at $E$. Let $I$ be the set of isolated vertices of
$\Gamma$, so that $\Omega=S \dot \cup E \dot \cup I$.

Now $\alpha^H \subseteq S$ and the bipartite structure implies that even $\alpha^H=S$.
Similarly $\beta^H =E$. Therefore $\Sym(P)$ stabilizes the sets $S$, $E$ and $I$. We
already know that $\Sym(P)$ permutes the vertices of $\Gamma$ faithfully, so now we
look at arcs.

Let $g \in \Sym(P)$ and let $(\omega_1,\omega_2) \in A$. Then $\omega_1 \in S$,
$\omega_2 \in E$ and there exists some $h \in H$ such that
$(\alpha^h,\beta^h)=(\omega_1,\omega_2)$. Since $\Sym(P)$ stabilizes the sets $S$
and $E$, we see that $\omega_1^g \in S$ and $\omega_2^g \in E$. The completeness
property then implies that $(\omega_1^g,\omega_2^g) \in A$.

Conversely, if $(\omega_1^g,\omega_2^g) \in A$, then there exists some $h \in H$
such that $(\alpha^h,\beta^h)=(\omega_1^g,\omega_2^g)$. Now
$\omega_1=\alpha^{hg^{-1}} \in S$ and $\omega_2=\beta^{hg^{-1}} \in E$ whence
$(\omega_1,\omega_2) \in A$ by completeness.

Hence $\Gamma$ is futile.\\

\textbf{Case 2:} $\Gamma$ is not a proper digraph, which means that it is self-paired.

We still have our connected component $\Delta$ and we begin, once more, with the hypothesis that $\Gamma$ is futile. Let $\gamma \in \Omega$ be an arbitrary, non-isolated
vertex.

We know that $\beta^H=\alpha^H$ by Lemma \ref{basic}~(iii) and (iv), because
$\Gamma$ is self-paired. As $\gamma$ was chosen to be a non-isolated vertex, there is some arc that starts or ends in $\gamma$. Therefore $\gamma \in \alpha^H$ and hence $\alpha,\beta,\gamma$ are all in the same
$H$-orbit and hence in a common cell of the partition $P$.
In particular the transposition $g:=(\beta,\gamma)$ is contained in $\Sym(P)$ and,
because of futility, it induces a graph automorphism on $\Gamma$.

Then $(\alpha,\beta) \in A$ implies that $(\alpha,\gamma)=(\alpha^g,\beta^g) \in
A$. This argument shows that $\Delta$ is the
only connected component of size at least $2$ in $\Gamma$ and that (b) is true.\\

We conversely suppose that $\Delta$ is the unique connected component of size at least $2$ of $\Gamma$ and that (b) holds. Together with the definition of orbital
graphs (and the fact that arcs always go both ways in the present case) this
implies that $\alpha^H=\beta^H$ spans $\Delta$ and that the isolated vertices, viewed as elements of $\Omega$, are
not contained in $\alpha^H$.

We know that $\Sym(P)$ acts faithfully on the vertex set of $\Gamma$. Now let $g \in
\Sym(P)$ and let $\omega_1,\omega_2 \in \Omega$. We recall that $\alpha^H=\beta^H$
is $\Sym(P)$-invariant.

Then it follows as in Case 1, using the completeness, that $(\omega_1,\omega_2)
\in A$ if and only if $(\omega_1^g,\omega_2^g) \in A$. Consequently $\Sym(P)$ acts as a group of graph
automorphisms on $\Gamma$, i.e. $\Gamma$ is futile.
\end{proof}

We give an example in order to illustrate that futility of an orbital graph is not obvious and why further investigations into the computational usefulness of orbital graphs should be pursued.

\begin{ex}\label{2K3}
We let $G:=\s_9$ and we look at the subgroup

$H:=\langle
(1\,2),(1\,3),(4\,5),(4\,6),(1\,4)(2\,5)(3\,6),(7\,8\,9)\rangle$. Let $\Gamma$ be the
orbital graph for $H$ with base-pair $(1,2)$. Then $\Gamma$ has the following shape:

On the vertices $1,2,3$ and $4,5,6$ we have a complete digraph, respectively,
there is no arc between the sets $\{1,2,3\}$ and $\{4,5,6\}$, and the points
$7,8$ and $9$ are isolated. This might look like a futile graph, but according
to the theorem it is not. Consider an ordered orbit partition
$P:=[1,2,3,4,5,6 \mid 7,8,9]$ of $H$.

The group $\Sym(P)$ contains the transposition $(2\,4) \in G$.
This element interchanges the vertices $2$ and $4$ of $\Gamma$ and fixes $1$, so
this element does not induce an automorphism on $\Gamma$. (Otherwise the arc
$(1,2)$ would be mapped to the arc $(1,4)$, which does not exist). This graph
can be used to deduce, for example, that any element which swaps $1$ and $4$ must also
swap $\{2,3\}$ with $\{5,6\}$.

Hence $\Sym(P)$ does not act as a group of automorphisms on $\Gamma$ and we see that
$\Gamma$ is not futile.
\end{ex}

It is important that we can detect futile graphs easily, without having to build them
explicitly. We will now give a collection of lemmas that allow futile orbital graphs to
be detected using only information about orbits and stabilizers of a group, without
explicit construction of entire orbital graphs.

\begin{lem}\label{allplnontrans}
Suppose that Hypothesis \ref{orbhyp} holds and that
\(\Omega=\alpha^H\dot{\cup}\beta^H\dot{\cup} I\), where $I \subseteq \Omega$ is the set of isolated vertices of $\Gamma$.
Then \(\Gamma\) is futile if and only if \(H_{\alpha}\) acts transitively on \(\beta^H\).

\end{lem}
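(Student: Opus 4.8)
The plan is to reduce the statement to Theorem~\ref{use} and then identify its combinatorial conclusion with the transitivity condition on $H_\alpha$. First I would observe that the hypothesis $\Omega=\alpha^H\dot{\cup}\beta^H\dot{\cup} I$ forces $\alpha^H\neq\beta^H$: two $H$-orbits are either equal or disjoint, and equal orbits (both nonempty, containing $\alpha$ and $\beta$) cannot be written as a disjoint union. By Lemma~\ref{basic}, a self-paired orbital graph would make $(\beta,\alpha)$ an arc and hence put $\beta$ in the set of starting vertices $\alpha^H$, forcing $\alpha^H=\beta^H$; so $\Gamma$ is not self-paired, i.e. a proper digraph, and we are precisely in Case~1 of Theorem~\ref{use}. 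There, futility is equivalent to $\Gamma$ having a unique connected component $\Delta$ of size at least $2$ which is a complete bipartite digraph (option (b) is excluded, since a complete digraph is self-paired). Throughout I write $S:=\alpha^H$ and $E:=\beta^H$, which by Lemma~\ref{basic}(iii),(iv) are exactly the sets of starting and ending vertices of arcs.

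The crucial bridge between the graph-theoretic statement and the group-theoretic one is the out-neighbourhood of $\alpha$. As computed in Lemma~\ref{basic}(v), the arcs leaving $\alpha$ end exactly at the vertices of $\beta^{H_\alpha}$, and since $H_\alpha\le H$ we have $\beta^{H_\alpha}\subseteq\beta^H=E$. Hence $\alpha$ is joined to every vertex of $E$ if and only if $\beta^{H_\alpha}=\beta^H$, which is precisely the assertion that $H_\alpha$ acts transitively on $\beta^H$. For the forward direction I would assume $\Gamma$ is futile; Theorem~\ref{use} then makes $\Delta$ complete bipartite with parts $S$ and $E$, so in particular $\alpha$ is joined to all of $E$, giving $\beta^{H_\alpha}=\beta^H$ and therefore transitivity.

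For the converse I would assume $H_\alpha$ acts transitively on $\beta^H$, so that $\alpha$ is joined to all of $E$, and then propagate this completeness across $S$. Given an arbitrary $\gamma=\alpha^h\in S$, Lemma~\ref{act} shows that $h$ induces a graph automorphism carrying the out-arcs of $\alpha$ bijectively onto the out-arcs of $\gamma$; since $h$ permutes the orbit $E=\beta^H$, it follows that $\gamma$ is also joined to all of $E$. Thus the arc set is exactly $S\times E$, so $S\cup E$ spans a complete bipartite digraph, which is connected (both parts being nonempty) and, as every arc starts in $S$ and ends in $E$, is the unique connected component of size at least $2$. Theorem~\ref{use} then yields that $\Gamma$ is futile.

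I expect the only genuinely substantive step to be this propagation in the converse: verifying that completeness at the single vertex $\alpha$ spreads to every vertex of $S$ through the automorphism action of Lemma~\ref{act}, and confirming that $S\cup E$ is indeed the \emph{unique} non-trivial component (all remaining vertices being isolated, as guaranteed by the hypothesis on $I$). Everything else amounts to bookkeeping with the orbit descriptions in Lemma~\ref{basic} and a direct appeal to the classification in Theorem~\ref{use}.
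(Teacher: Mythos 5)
Your proposal is correct and follows essentially the same route as the paper: both directions reduce to Theorem~\ref{use} by identifying futility with $\Gamma$ being a complete bipartite digraph on the parts $\alpha^H$ and $\beta^H$, and then reading off that the out-neighbourhood of $\alpha$ equals $\beta^{H_\alpha}$. The only cosmetic difference is in the converse, where you propagate the complete out-neighbourhood from $\alpha$ to all of $\alpha^H$ via the automorphism action of Lemma~\ref{act}, while the paper instead deduces that $H_\beta$ is transitive on $\alpha^H$ by an explicit element computation; these amount to the same thing.
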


\begin{proof}
Suppose that $\Gamma$ is futile. Then Theorem \ref{use} and Lemma \ref{basic}~(iii) and (iv) imply that
\(\Gamma\) is a complete bipartite digraph.
In particular, for all \(\delta \in \beta^H\) it follows that \((\alpha,
\delta)\in A\) and so there exists some \(h\in H\) such that
$(\alpha,\delta)=(\alpha^h,\beta^h)$. In particular $H_\alpha$ is transitive on
$\beta^H$.
Conversely we suppose that $H_\alpha$ is transitive on $\beta^H$. It follows that for all \(\beta' \in \beta^H\) there exists some \(h\in H_\alpha\) such that $\beta^h=\beta'$.

We prove that $H_\beta$ acts transitively on $\alpha^H$, so we let $\alpha' \in
\alpha^H$ and we choose $g \in H$ such that $\alpha'=\alpha^g$. Then, using the
transitivity argument above, we let $h \in H_\alpha$ be such that
$\beta^h=\beta^{g^{-1}}$, which implies $\beta^{hg}=\beta$ and $\alpha^{hg}=\alpha'$. Therefore $H_\beta$ acts transitively on $\alpha^H$.
Now the definition of an orbital graph implies that $\Gamma$ is a complete
bipartite digraph and hence futile, by Theorem \ref{use}.
\end{proof}

We finish this section by giving some concrete bounds on the number of edges in futile and non-futile orbital graphs.

\begin{lem}\label{altcounting}
  Suppose that Hypothesis \ref{orbhyp} holds. Let $n = |\alpha^H|$, $m =
  |\beta^H|$, and $I \subseteq \Omega$ be the set of isolated vertices of $\Gamma$.
  Then $\Gamma$ is futile if one of the following hold.
  \begin{enumerate}[(i)]
  \item $\beta \in \alpha^H$ and $\Gamma$ has strictly more than \(n(n-2)\)
    arcs.
  \item $\Omega = \alpha^H \dot\cup \beta^H \dot\cup I$ and $\Gamma$ has
    strictly more than \(n(m-1)\) or \(m(n-1)\) arcs.
  \end{enumerate}
\end{lem}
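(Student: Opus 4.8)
The plan is to reduce both parts to the divisibility structure of the arc count recorded in Remark~\ref{rem}(a), which states that $|A| = |\alpha^H|\cdot|\beta^{H_\alpha}| = n\cdot|\beta^{H_\alpha}|$; symmetrically, counting arcs by their endpoints via Lemma~\ref{basic}(iv) and~(v) gives $|A| = m\cdot|\alpha^{H_\beta}|$. The crucial observation is that $|A|$ is therefore a multiple of $n$ (and of $m$), so a strict lower bound lying just below a multiple of $n$ forces $|A|$ up to the next multiple, which in each case will be the maximal possible value. Once the arc count is pinned to its maximum, futility follows either from the classification in Theorem~\ref{use} (for part~(i)) or from Lemma~\ref{allplnontrans} (for part~(ii)).

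For part~(i), I would first note that $\beta \in \alpha^H$ forces $\alpha^H = \beta^H$ and $n = m \geq 2$, and that by Lemma~\ref{basic}(iii) and~(iv) every non-isolated vertex lies in $\alpha^H$, so all arcs run among the $n$ vertices of $\alpha^H$. Since there are no loops, the complete digraph on these vertices, with $n(n-1)$ arcs, is the unique maximal possibility. Here one must use the containment $\beta^{H_\alpha} \subseteq \alpha^H\setminus\{\alpha\}$, which holds because $H_\alpha$ fixes $\alpha$ while $\beta \neq \alpha$; this gives $|\beta^{H_\alpha}| \leq n-1$ and hence $|A| = n\cdot|\beta^{H_\alpha}| \leq n(n-1)$. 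The hypothesis $|A| > n(n-2)$ together with divisibility by $n$ then forces $|A| = n(n-1)$, so the induced digraph on $\alpha^H$ is complete. This is a connected component of size $n \geq 2$ and it is the unique component of size at least $2$, so Theorem~\ref{use}(b) yields futility.

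For part~(ii), the hypothesis $\Omega = \alpha^H \dot\cup \beta^H \dot\cup I$ matches the setup of Lemma~\ref{allplnontrans}, by which it suffices to prove that $H_\alpha$ acts transitively on $\beta^H$. From $|A| = n\cdot|\beta^{H_\alpha}|$ and $\beta^{H_\alpha} \subseteq \beta^H$ I get $|A| \leq nm$; if $|A| > n(m-1)$, divisibility by $n$ forces $|A| = nm$, hence $|\beta^{H_\alpha}| = m$ and $H_\alpha$ is transitive on $\beta^H$, giving futility. For the alternative bound I would run the symmetric argument using $|A| = m\cdot|\alpha^{H_\beta}|$ and $\alpha^{H_\beta} \subseteq \alpha^H$: the assumption $|A| > m(n-1)$ forces $|A| = mn$, so $H_\beta$ is transitive on $\alpha^H$, which the proof of Lemma~\ref{allplnontrans} already shows to be equivalent to transitivity of $H_\alpha$ on $\beta^H$, again yielding futility.

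I do not expect a genuine obstacle here: the only real content is the ``jump to the next multiple'' step, and that is handed to us for free by the regularity of out-degrees and in-degrees recorded in Remark~\ref{rem}(a) and Lemma~\ref{basic}(v). The two points deserving care are the containment $\beta^{H_\alpha} \subseteq \alpha^H\setminus\{\alpha\}$ in part~(i), ensuring the maximal out-degree is $n-1$ rather than $n$, and the clean invocation in part~(ii) of the equivalence between the two transitivity conditions established inside the proof of Lemma~\ref{allplnontrans}.
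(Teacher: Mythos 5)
Your proof is correct and follows essentially the same route as the paper: both arguments rest on the fact that the total arc count equals $n$ times the common out-degree (Remark~\ref{rem}(a), equivalently Lemma~\ref{basic}(iii) and (v) with Lemma~\ref{transimp}), so that exceeding $n(n-2)$ (resp.\ $n(m-1)$ or $m(n-1)$) forces the maximal arc count and hence a complete (resp.\ complete bipartite) digraph. The only cosmetic differences are that you phrase the key step as a divisibility argument where the paper argues contrapositively that a single missing arc caps every out-degree, and that in part (ii) you route the conclusion through the transitivity criterion of Lemma~\ref{allplnontrans} rather than reading off bipartite completeness and invoking Theorem~\ref{use} directly.
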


\begin{proof}
  To prove (i) suppose that \(\gamma,\delta \in \alpha^H\) are distinct and such
  that \((\gamma, \delta) \notin A\). Let \(r\) be the number of arcs starting in
  \(\gamma\). Now \((\gamma,\gamma)\) and \((\gamma,\delta)\) are not in $A$, so
  it follows that \(r\leq n-2\).
  We recall that $H$ is transitive on $\alpha^H$, and hence all connected
  components of $\Gamma$ have size at least $2$, by Remark \ref{rem}~(c).
In particular $\gamma$ is contained in a connected component of $\Gamma$ of
  size at least two, so we deduce from Lemma \ref{basic}~(iii) and (iv) and Lemma
  \ref{transimp} that for every vertex of \(\Gamma\), the number of arcs starting
  there is \(r\). Consequently $|A|= n\cdot r\leq n\cdot (n-2)$. This means,
  conversely, that $\Gamma$ is a complete digraph on $\alpha^H$ as soon as it has
  strictly more than \(n\cdot (n-2)\) arcs.

  To show (ii) suppose that there are \(\gamma \in \alpha^H\) and \(\delta \in
  \beta^H\) such that \((\gamma, \delta)\notin A\). Let \(r\) be the number of
  arcs starting in \(\gamma\). As all arcs starting in \(\gamma\) end in a vertex of
  \(\beta^G\backslash \{\delta\}\) it follows that \(r\leq m-1\).
  Let $\omega \in \alpha^H$. Then it follows from Lemma \ref{basic}~(iii) that the
  number of arcs starting in \(\omega\) is
  $|\{(\omega_1,\beta^{g})\mid g\in H, \alpha^g=\omega_1\}|$.
 Hence $|A|=n\cdot r\leq n\cdot (m-1)$.

  By counting the number of arcs ending in some vertex we obtain, in a similar way, that
  $|A| \le m\cdot (n-1)$ as well.
  Hence if \(\Gamma\) has strictly more than \(n\cdot (m-1)\) or \(m\cdot (n-1)\)
  arcs, then \(\Gamma\) is a complete bipartite digraph.
\end{proof}

In practice we use Corollary \ref{cor:futilecheck}, which combines Lemma \ref{altcounting} with Remark
\ref{rem} to efficiently identify futile
orbital graphs before they are constructed.

\begin{cor}\label{cor:futilecheck}
Suppose that Hypothesis \ref{orbhyp} holds. Then $\Gamma$ is
futile if and only if one of the following conditions is true:

\begin{enumerate}[(i)]
\item $\beta \in \alpha^H$ and $|\beta^{H_\alpha}| = |\alpha^H|$.
\item $\beta \not\in \alpha^H$ and $|\beta^{H_\alpha}| = |\beta^H|$.
\end{enumerate}
\end{cor}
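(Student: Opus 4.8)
The plan is to reduce the equivalence to \cref{use} by translating each stated orbit-size identity into a statement about the number of arcs, using the product formula $|A| = |\alpha^H|\cdot|\beta^{H_\alpha}|$ from \cref{rem}~(a). The two clauses of the corollary are governed by the mutually exclusive and exhaustive alternatives $\beta \in \alpha^H$ and $\beta \notin \alpha^H$, so I would prove the equivalence case by case, matching each alternative to one of the two admissible futile shapes from \cref{use}: a complete bipartite digraph in case (ii) and a complete digraph in case (i).

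First I would dispatch case (ii), where $\beta \notin \alpha^H$. Here \cref{basic}~(iii) and (iv) identify the starting vertices as $\alpha^H$ and the end vertices as $\beta^H$; since $\beta \notin \alpha^H$ these two orbits are disjoint, and because every non-isolated vertex lies on an arc it is either a start or an end. Hence the decomposition $\Omega = \alpha^H \dot\cup \beta^H \dot\cup I$ holds automatically, which is exactly the hypothesis of \cref{allplnontrans}. That lemma already yields that $\Gamma$ is futile if and only if $H_\alpha$ acts transitively on $\beta^H$, and since $\beta^{H_\alpha}$ is an $H_\alpha$-orbit contained in $\beta^H$, transitivity is equivalent to $|\beta^{H_\alpha}| = |\beta^H|$. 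This is precisely the condition in part (ii).

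Second, for case (i) with $\beta \in \alpha^H$, I would first observe that $\alpha^H = \beta^H$, so $H$ is transitive on all non-isolated vertices and every connected component has size at least $2$ by \cref{rem}~(c). The key structural point is that a complete bipartite component is now impossible: by \cref{basic}~(iii) and (iv) the start and end sets of such a component would be $\alpha^H$ and $\beta^H$, which coincide, whereas a proper bipartite digraph requires disjoint start and end sets. Therefore, by \cref{use}, futility in this case is equivalent to the unique nontrivial component being a complete digraph on $\alpha^H$, and I would invoke \cref{altcounting}~(i), whose arc-count threshold is met exactly when this completeness holds, converting that condition back into the orbit-size identity of part (i) through the formula $|A| = |\alpha^H|\cdot|\beta^{H_\alpha}|$.

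The step requiring the most care is this arc-count reconciliation in case (i): one must determine exactly how many arcs a loop-free complete digraph on $\alpha^H$ carries and verify which orbit-size identity the threshold of \cref{altcounting}~(i) corresponds to under the product formula. Because the arcs issuing from $\alpha$ are counted by $|\beta^{H_\alpha}|$ via \cref{basic}~(v) and no loop $(\alpha,\alpha)$ is present, this bookkeeping is where the precise form of the condition in part (i) is pinned down, and it is the only genuinely delicate point of the argument; the bipartite case (ii) is routine once the decomposition $\Omega = \alpha^H \dot\cup \beta^H \dot\cup I$ is in hand and \cref{allplnontrans} is applied.
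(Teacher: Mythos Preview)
Your proposal is correct and, for case~(i), runs parallel to the paper's proof: both arguments pass through the arc-count formula $|A|=|\alpha^H|\cdot|\beta^{H_\alpha}|$ of \cref{rem}~(a) and the threshold of \cref{altcounting}~(i), with \cref{use} in the background to rule out the bipartite shape. Your treatment is in fact a bit more explicit about why the bipartite alternative cannot occur when $\beta\in\alpha^H$.

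The one genuine difference is in case~(ii). The paper again argues via arc counts, invoking \cref{altcounting}~(ii) and comparing $|\alpha^H|\cdot|\beta^{H_\alpha}|$ against $|\alpha^H|(|\beta^H|-1)$ to force $|\beta^{H_\alpha}|=|\beta^H|$. You instead observe that $\Omega=\alpha^H\,\dot\cup\,\beta^H\,\dot\cup\,I$ holds automatically when $\beta\notin\alpha^H$ and then apply \cref{allplnontrans} directly, which already packages the full if-and-only-if (futile $\Leftrightarrow$ $H_\alpha$ transitive on $\beta^H$ $\Leftrightarrow$ $|\beta^{H_\alpha}|=|\beta^H|$). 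Your route is shorter and avoids the inequality manipulation; the paper's route has the virtue of uniformity, handling both cases with the same counting lemma. Either is perfectly sound.
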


\begin{proof}
\begin{enumerate}
\item[(i)] We are in Case (i) of Lemma \ref{altcounting}. By Remark \ref{rem} the orbital graph
has size  $|\alpha^H| \cdot |\beta^{H_\alpha}|$. The only way this can be larger than $|\alpha^H|(|\alpha^H|-2)$
is if $|\beta^{H_\alpha}| + 1 \geq |\alpha^H|$. As $\beta \in \alpha^H$, we see that $\beta^{H_\alpha}$ is a proper subset of $\alpha^H$ (the subset is proper because it does not contain $\alpha$). Therefore $|\beta^{H_\alpha}| + 1 = |\alpha^H|$.
\item[(ii)] We are in Case (ii) of Lemma \ref{altcounting}. Again by Remark \ref{rem} the orbital graph
has size $|\alpha^H| \cdot |\beta^{H_\alpha}|$. The only way this can be larger than $|\alpha^H|(|\beta^H|-1)$
is if $|\beta^H| \leq |\beta^{H_\alpha}|$. As $\beta^H$ contains $\beta^{H_\alpha}$, this implies that $|\beta^H| = |\beta^{H_\alpha}|$.
\end{enumerate}
\end{proof}

\begin{lem}\label{everypointless}
Suppose that Hypothesis \ref{orbhyp} holds and that $H$ acts transitively on $\Omega$.
\begin{enumerate}[(i)]
\item\label{ep1} If $H$ acts 2-transitively on $\Omega$, then $\Gamma$ is futile.
\item\label{ep2} If $\Gamma$ is futile, then $H$ acts 2-transitively on $\Omega$ (and hence all orbital graphs are futile).
\end{enumerate}
\end{lem}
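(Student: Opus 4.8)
The plan is to lean on Theorem \ref{use}, which already reduces futility to a statement about the shape of the connected components of $\Gamma$, and then to feed in the extra hypothesis that $H$ is transitive on $\Omega$. The key simplifications coming from transitivity are that the ordered orbit partition $P$ of $H$ is trivial, so that $\Sym(P) = \Sym(\Omega)$, and that $\Gamma$ has no isolated vertices by Remark \ref{rem}~(c).

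For part \eqref{ep1} I would first observe that $2$-transitivity forces $A$ to be the set of all ordered pairs of distinct points: given any distinct $\gamma, \delta \in \Omega$ there is $h \in H$ with $(\alpha^h, \beta^h) = (\gamma, \delta)$, so $(\gamma, \delta) \in A$. Hence $\Gamma$ is the complete digraph on $\Omega$. Since $\alpha \neq \beta$ we have $|\Omega| \geq 2$, so $\Gamma$ consists of a single connected component of size at least $2$, which is a complete digraph, and Theorem \ref{use}~(b) yields futility immediately. (Equivalently, $\Sym(P) = \Sym(\Omega)$, and every permutation of $\Omega$ is an automorphism of the complete digraph.)

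For part \eqref{ep2} I would start from the futility of $\Gamma$ and apply Theorem \ref{use}: there is a unique connected component $\Delta$ of size at least $2$, and either $\Delta$ is a complete bipartite digraph or a complete digraph. Because $H$ is transitive there are no isolated vertices, so every vertex lies in a component of size at least $2$; uniqueness then forces $\Gamma = \Delta$. I would rule out the bipartite case as follows: in a complete bipartite digraph the set of starting points is disjoint from the set of end points, but by Lemma \ref{basic}~(iii) and (iv) these sets are $\alpha^H$ and $\beta^H$, both equal to $\Omega$ by transitivity, which is impossible since $\Omega \neq \emptyset$. Therefore $\Gamma$ is the complete digraph on $\Omega$, i.e. $A$ is exactly the set of ordered pairs of distinct points. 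Since $A = \{(\alpha^h, \beta^h) \mid h \in H\}$, this says $H$ is transitive on ordered pairs of distinct points, which is precisely $2$-transitivity. The parenthetical claim then follows by applying part \eqref{ep1} to an arbitrary base-pair.

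The individual steps (unwinding the definitions) are all short; the one place that needs care is excluding the complete bipartite case in part \eqref{ep2}, and, more generally, keeping straight that the statement ``$\Gamma$ is the complete digraph on $\Omega$'' is literally the same as ``$H$ is transitive on ordered pairs of distinct points''. I expect that bookkeeping, rather than any genuine difficulty, to be the main thing to get right.
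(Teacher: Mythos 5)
Your proposal is correct and follows essentially the same route as the paper: both directions are reduced to Theorem \ref{use}, with $2$-transitivity identified with $\Gamma$ being the complete digraph, and the bipartite alternative in part (ii) excluded via transitivity. The only difference is that you spell out (via Lemma \ref{basic}~(iii) and (iv)) why the complete bipartite case is impossible, a step the paper dispatches in one sentence.
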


\begin{proof}
For (\ref{ep1}) we suppose that \(H\) acts 2-transitively on \(\Omega\). Then
whenever \(\gamma, \delta \in \Omega\) are distinct, there exits some \(h \in H\) such that \((\alpha^h,\beta^h)=(\gamma,\delta)\) and hence
\(\Gamma\) is a complete digraph.
By Theorem \ref{use} it follows that $\Gamma$ is futile.

For (\ref{ep2}) we suppose that \(\Gamma\) is futile and we deduce, again by
Theorem \ref{use}, that \(\Gamma\) is a complete digraph or a complete bipartite
digraph. The second case is impossible because \(H\) is transitive on
\(\Omega\). So \(\Gamma\) is a complete digraph and for any two distinct
elements \(\gamma, \delta \in \Omega\), we deduce that \((\gamma, \delta) \in
A\). Then by definition of an orbital graph, there is \(h\in H\) such that
\((\alpha^h,\beta^h)=(\gamma,\delta)\). Hence \(H\) acts 2-transitively on
\(\Omega\) and the last statement follows from (\ref{ep1}).
\end{proof}

So we see that for transitive groups if one orbital graph is futile, then all of
them are. Lemma \ref{everypointless} lets us quickly detect this, as the level of transitivity of
a group can be efficiently calculated.

\subsection{Efficiently Creating Orbital Graphs}

While orbital graphs can be very useful in reducing search, they are expensive to create, so we only want to compute them when they provide extra refinements.
We use \cref{alg:orbitals} to compute orbital graphs, which assumes the use of a computational group theory system, such as GAP, that provides basic algorithms to compute point stabilizers, orbits of points, and 
orbits of pairs of points.

\begin{algorithm}
  \caption{Find Orbital Graphs}\label{alg:orbitals}
  \begin{algorithmic}[1]
    \Procedure{OrbitalBase}{$G,\Omega, SizeLimit$}\Comment{Orbital graphs of \(G\), a permutation group on \(\Omega\)}
    \State{\(\mathrm{Graphs} := []\)}
    \If{$G$ is \(k\)-transitive for \(k \geq 2\)}\label{line:2trans}
    \State \Return{Graphs}
    \EndIf
    \For{\(Orb \in \mathrm{Orbits}(G)\)}\label{line:outerorb}
    \If{\(|Orb| > 1\)}\label{line:if-basic}
    \State \(G' = \mathrm{Stabilizer}(G, \mathrm{Min}(Orb))\)\label{line:pickouterorb}
	  \For{\(InnerOrb \in\) Orbits\((G')\)}\label{line:innerorb}
    \If{\(|\mathrm{Orb}|\times |\mathrm{InnerOrb}| \leq SizeLimit\)}\label{line:if-sizelimit}
    \If{\(InnerOrb \not\in \mathrm{Orbits}(G)\)}\label{line:if-bipartite}
    \If{\(InnerOrb \in Orb\) and \(|InnerOrb| + 1 = |Orb|\)}\label{line:if-clique}
    \State \(\mathrm{Add}(Graphs, Orbit(G, (\mathrm{Min}(Orb), \mathrm{Min}(InnerOrb)) ) )\)
		\EndIf
    \EndIf\EndIf
	  \EndFor
    \EndIf
    \EndFor
    \State \Return{Graphs}
    \EndProcedure
  \end{algorithmic}
\end{algorithm}

The correctness of Algorithm \ref{alg:orbitals} is proven by applying results from the preceding sections, in particular \cref{lem:orbitalorbits}, \cref{everypointless}, and \cref{use}.

\begin{theorem}
Algorithm \ref{alg:orbitals} returns all orbital graphs, except those that are futile or that contain more than \textit{SizeLimit} edges.
\end{theorem}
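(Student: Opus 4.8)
The plan is to show that \cref{alg:orbitals} enumerates, without repetition, one canonical base-pair for every orbital graph of its input group, and that the two nested conditional tests discard exactly the futile graphs together with those exceeding the size bound. I would split the verification into a \emph{completeness} claim (every non-futile orbital graph with at most \textit{SizeLimit} arcs is produced) and a \emph{soundness} claim (every base-pair added is that of a genuine, non-futile, within-bound orbital graph). The tools are \cref{lem:orbitalorbits} for organizing the base-pairs, \cref{rem}~(a) for counting arcs, \cref{cor:futilecheck} for detecting futility, and \cref{everypointless} for the early exit on \cref{line:2trans}. Throughout, let $H$ denote the input group, so that $H$ plays the role of the subgroup in \cref{orbhyp}; write $\alpha = \mathrm{Min}(Orb)$, $\beta = \mathrm{Min}(InnerOrb)$, and note that $G'$ in the algorithm is $H_\alpha$.

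First I would establish that the double loop produces exactly one base-pair per orbital graph. By \cref{lem:orbitalorbits}~(i) we may fix the first coordinate to the representative $\mathrm{Min}(Orb)$ of each $H$-orbit without losing any orbital graph; by part~(ii) base-pairs whose first coordinates lie in distinct $H$-orbits give disjoint sets of orbital graphs, so there is no cross-orbit duplication; and by part~(iii), as $InnerOrb$ ranges over the orbits of $H_\alpha$, the pairs $(\alpha, \beta)$ run through pairwise distinct orbital graphs and hit all of them. The early return on \cref{line:2trans} is justified by \cref{everypointless}: when $H$ is $2$-transitive every orbital graph is futile, so the empty list is correct. I would also argue that restricting to $|Orb| > 1$ on \cref{line:if-basic} is harmless: if $\alpha^H = \{\alpha\}$ then $H_\alpha = H$, so $|\beta^{H_\alpha}| = |\beta^H|$, and \cref{cor:futilecheck}~(ii) makes every such graph futile.

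Next I would check the two filters. By \cref{rem}~(a) the orbital graph with base-pair $(\alpha,\beta)$ has exactly $|\alpha^H|\cdot|\beta^{H_\alpha}| = |Orb|\times|InnerOrb|$ arcs, so \cref{line:if-sizelimit} discards precisely the over-size graphs. For futility I would translate \cref{cor:futilecheck} into the orbit language of the algorithm: $\beta \in \alpha^H$ holds exactly when $InnerOrb \subseteq Orb$, and $|\beta^{H_\alpha}| = |\beta^H|$ holds exactly when $InnerOrb \in \mathrm{Orbits}(H)$, since $\beta^{H_\alpha}\subseteq\beta^H$ is a full $H$-orbit if and only if the two have equal size. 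Thus the complete-bipartite futile case of \cref{cor:futilecheck}~(ii) is the situation governed by \cref{line:if-bipartite}, and the complete-digraph futile case of part~(i), read as $|\beta^{H_\alpha}| + 1 = |\alpha^H|$, is the arithmetic condition on \cref{line:if-clique}. Matching the Boolean combination of these two lines against \cref{cor:futilecheck} then shows a base-pair survives if and only if its graph is non-futile.

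The main obstacle is precisely this last bookkeeping: aligning the two nested conditionals with the two-case criterion of \cref{cor:futilecheck}. The delicate point is that when $\beta \in \alpha^H$ the set $\beta^{H_\alpha}$ is always a \emph{proper} subset of $\alpha^H$ (it omits $\alpha$), so the complete-digraph condition must be read through the arc-count inequality of \cref{altcounting}~(i), i.e. as $|\beta^{H_\alpha}| + 1 = |\alpha^H|$ rather than as a naive equality; getting this boundary exactly right, and confirming that neither the singleton-orbit nor the $2$-transitive corner case drops a non-futile graph, is where the care lies. Soundness is then immediate: every pair $(\mathrm{Min}(Orb), \mathrm{Min}(InnerOrb))$ that is added has distinct coordinates and passes both filters, so by construction it is the base-pair of a genuine orbital graph that is non-futile and has at most \textit{SizeLimit} arcs.
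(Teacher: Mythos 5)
Your proposal follows essentially the same route as the paper's own proof: both organize the argument around Lemma~\ref{lem:orbitalorbits} to enumerate exactly one base-pair per orbital graph, Lemma~\ref{everypointless} for the $2$-transitive early exit, and the futility characterization to justify the two filter lines, with the only deviation being that you ground the filters in Corollary~\ref{cor:futilecheck} and the arc count of Remark~\ref{rem}~(a) rather than citing Theorem~\ref{use} directly --- which if anything matches the algorithm's orbit-based tests more closely. The remaining Boolean bookkeeping is left at the same level of detail as in the paper, so the two arguments are equivalent in substance and rigor.
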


\begin{proof}
First, \cref{line:2trans} performs an initial check whether the group is
\(k\)-transitive for \(k \geq 2\). If it is, then we stop because, by Lemma \ref{everypointless}(i), all orbital graphs of \(G\) are futile in this case.

After this, \cref{line:outerorb} picks one member \(a\) from each orbit of \(G\)
to be the first member of a base-pair. Here we apply Lemma \ref{lem:orbitalorbits}: In order to generate all orbital graphs, 
it is enough to pick one element from each orbit as first member, by Part (i) of the lemma. Then
Part (ii) shows that no orbital graph will be generated twice.
\cref{line:pickouterorb,line:innerorb} pick one member from each of the orbits of the stabilizer of each of our first base-pair points. Part (iii) of the Lemma \ref{lem:orbitalorbits} shows that this will give us a set of base-pairs from which every orbital graph arises exactly once.
We now move through the other lines, which skip orbital graphs we do not want to
consider. \Cref{line:if-bipartite} and \Cref{line:if-clique} check the
conditions of Theorem \ref{use}, rejecting all futile orbital graphs.
\Cref{line:if-sizelimit}  allows us to reject any orbital graph that exceeds a user-defined
limit on the number of edges.

Finally, \cref{line:if-basic} provides a fast early check. If some \(\alpha \in
\Omega\) is already fixed by \(G\), then \(G=G_\alpha\), which means that
\Cref{line:if-bipartite} will always fail. Therefore we may as well reject such
points immediately.
\end{proof}

The runtime of \cref{alg:orbitals} is, for most problems, dominated by the
calculation of the point stabilizers on \Cref{line:pickouterorb}. In GAP these
are calculated using a randomized implementation of the Schreier-Sims algorithm.
As our experiments will show, \cref{alg:orbitals} performs well in practice.

\section{Graph refiners}

We will employ existing refiners for arbitrary graphs, as
discussed in \cite{McKay80} and \cite{McKay201494},
to create refiners for groups given as a set of generators.

Before we go into the details, we need some more definitions:

\begin{definition}[Equalizer]
Let $\Gamma = (\Omega,A)$ be a digraph.

For any ordered partition $P \in \OPart(\Omega)$, we say that a cell $\Delta$ of $P$ is
\textbf{$\Gamma$-equitable} if, for all cells $\Delta'$ of
$P$ there is some $k \in \N$ such that for all elements $i$ in the cell $\Delta$ it
holds that $|\{j \in \Delta' \mid (i,j) \in A$ or $(j,i) \in A \}| = k$.

We note that in this definition the number of arcs $k$ depends on $\Delta$ and
$\Delta'$, but not on the individual vertices in $\Delta$.

An ordered orbit partition $P$ is called \textbf{$\Gamma$-equitable} if all its cells are
$\Gamma$-equitable.\\

Next we suppose that $P$ and $P'$ are ordered partitions of $\Omega$.
We say that $P'$ is a \textbf{$\Gamma$-equalizer for $P$} if and only if $P'$ is
$\Gamma$-equitable, moreover $P'\preccurlyeq P$ and $P'$ is as coarse as
possible with this property, meaning that whenever $Q \in \OPart(\Omega)$ is also
$\Gamma$-equitable and $Q \preccurlyeq P$, then $Q \preccurlyeq P'$.
\end{definition}

\begin{rem}
Some remarks on the previous definition:

If $\Gamma = (\Omega,A)$ is a digraph and $P\in\OPart(\Omega)$, then two
distinct $\Gamma$-equalizers for $P$ can only differ by the ordering of their cells.
We also note that, if $P$ is $\Gamma$-equitable and $g$ is an automorphism of
$\Gamma$, then $P^g$ is also $\Gamma$-equitable.
\end{rem}

\cite{McKay201494} present algorithms for calculating $\Gamma$-equitable
ordered partitions. We discuss a simple algorithm that computes a
$\Gamma$-equalizer for $P$, given a digraph $\Gamma$ and an ordered orbit partition $P$ as input. The main difference between the algorithm in \cite{McKay201494}, and our implementation,
is that McKay and Piperno optimize this algorithm by showing several cases where
they can skip some attempts to split because 
they know that no splitting will occur. We omit these improvements because the total
time taken by the refinement algorithm in our partition backtracker is usually
negligible.

\begin{algorithm}
  \caption{Equitable Partitions}\label{alg:equitable}
  \begin{algorithmic}[1]
    \Procedure{Equitable}{$\Gamma, P$}
    \State{$\overline{P} := P$}
    \State{$T := P$}
    \While{($T$ not empty) \textbf{and} ($\overline{P}$ is not discrete)}
    \State{Pick and remove some cell $\Delta \in T$}
    \For{$\Delta' \in \overline{P}$}
    \State{Split $\Delta'$ into $\Delta'_1 \dots \Delta'_k$ equitably, according to
      edges starting at vertices in $\Delta$}
    \If{$k > 1$}
    \State{Replace the cell $\Delta'$ in $\overline{P}$ with $\Delta'_1 \dots \Delta'_k$}
    \State{Add $\Delta'_1,\ldots,\Delta'_k$ to $T$}
    \EndIf
    \EndFor
    \EndWhile
    \State\Return{$\overline{P}$}
    \EndProcedure
  \end{algorithmic}
\end{algorithm}

Using orbital graphs and ordered equitable partitions, we define our new refiner.

\begin{definition}[$\Orb_M$]\label{def:orbrefine}
   Given $M$ a subgroup of $\Sym({\Omega})$, the map $\Orb_M: \OPart(\Omega) \rightarrow \OPart(\Omega)$ is defined
   as follows:

  \begin{itemize}
  \item Construct all orbital graphs of $M$.
  \item Given an ordered orbit partition $P$ of $\Omega$, compute a $\Gamma$-equalizer for $P$
    for every orbital graph $\Gamma$ from the previous step, using
    Algorithm~\ref{alg:equitable}.
  \item Return the meet of all refined ordered partitions from the previous step.
  \end{itemize}

  We use the notation $\Orb$, without a subscript, for refiners for subgroup search.
\end{definition}

We argue now that $\Orb_M$ is in fact a refiner. A detailed example later in this section will illustrate how refiner works.

\begin{lem}\label{Orbref}
  $\Orb_M$ is a refiner.
\end{lem}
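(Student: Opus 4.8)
The plan is to verify the two defining properties of an $M$-refiner from \cref{def:refine} for the map $\Orb_M$. The fineness condition (a) is immediate: by construction $\Orb_M(P)$ is a meet of ordered partitions each of which is a $\Gamma$-equalizer for $P$, and a $\Gamma$-equalizer is by definition finer than $P$; since the meet of partitions that are all finer than $P$ is again finer than $P$, we get $\Orb_M(P) \preccurlyeq P$. The real work is condition (b), namely $\Orb_M(P^g) = {\Orb_M(P)}^g$ for all $g \in M$.

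To establish (b), I would track how each of the three construction steps in \cref{def:orbrefine} interacts with the action of $g \in M$. First, the set of orbital graphs of $M$ is the same whether we build it for $P$ or for $P^g$, since the orbital graphs depend only on the group $M$ and not on the input partition; moreover, by \cref{act} each $g \in M$ acts on every orbital graph $\Gamma$ as a graph automorphism. Second, I would use the remark following the Equalizer definition, which states that if $Q$ is $\Gamma$-equitable and $g$ is an automorphism of $\Gamma$, then $Q^g$ is $\Gamma$-equitable. The key claim is then that if $P'$ is a $\Gamma$-equalizer for $P$, then ${P'}^g$ is a $\Gamma$-equalizer for $P^g$. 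This holds because $g$ being a graph automorphism means it preserves $\Gamma$-equitability and the partial order $\preccurlyeq$ is preserved under the action of $g$ (equivalently, applying $g^{-1}$ to any $\Gamma$-equitable partition refining $P^g$ lands below $P'$), so the "as coarse as possible" characterization transports along $g$. Here one must be slightly careful, since $\Gamma$-equalizers are only unique up to the ordering of their cells; I would note that the algorithm fixes a canonical ordering (just as in the proof of \cref{Fixedref} for $\Fixed_M$) so that the equalizer is genuinely well-defined and the identity ${P'}^g = (\text{equalizer of } P^g)$ holds on the nose.

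Third, having shown that the action of $g$ carries the collection of equalizers for $P$ exactly onto the collection of equalizers for $P^g$, I would invoke \cref{lem:meetandact} to commute the action past the final meet. Concretely, if $P'_1, \dots, P'_r$ are the $\Gamma$-equalizers for the various orbital graphs $\Gamma_1, \dots, \Gamma_r$, then
\[
{\Orb_M(P)}^g = {\left(P'_1 \wedge \cdots \wedge P'_r\right)}^g = {P'_1}^g \wedge \cdots \wedge {P'_r}^g = \Orb_M(P^g),
\]
where the middle equality is a repeated application of \cref{lem:meetandact} and the last equality uses that ${P'_i}^g$ is precisely the $\Gamma_i$-equalizer for $P^g$.

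I expect the main obstacle to be the well-definedness issue around the ordering of cells in a $\Gamma$-equalizer, exactly as flagged for $\Fixed_M$ before \cref{Fixedref}: a $\Gamma$-equalizer is only unique up to reordering its cells, so without a deterministic tie-breaking rule the equation ${P'}^g = (\text{equalizer of }P^g)$ could fail merely because the two sides list the same cells in a different order. The resolution is to make \cref{alg:equitable} deterministic and to argue that the induced canonical ordering is compatible with the $M$-action, so that applying $g$ to the canonical equalizer of $P$ yields the canonical equalizer of $P^g$; once this compatibility is in place, the rest of the argument is the routine transport-along-automorphisms reasoning sketched above.
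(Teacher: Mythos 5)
Your proposal is correct and follows essentially the same route as the paper's (much terser) proof: condition (a) from the meet construction, and condition (b) from the fact that each $g \in M$ acts as an automorphism of every orbital graph (\cref{act}) combined with \cref{lem:meetandact} to commute $g$ past the final meet. Your additional care about the cell-ordering ambiguity of $\Gamma$-equalizers fills in a detail the paper leaves implicit (it only raises the analogous point before \cref{Fixedref}), but it does not change the argument.
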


\begin{proof}
  For all $P \in \OPart(\Omega)$, it holds that $\Orb_M(P)\preccurlyeq P$,
  because Algorithm~\ref{alg:equitable} splits cells of $P$.
  Next, we need to show that for all $g \in M$ it holds that $\Orb_M(P^g) =
  {Orb_M(P)}^g$, but this follows directly from the fact that $g$ is an
  automorphism of any orbital graph, and that $g$ commutes with taking meets
  of ordered partitions by Lemma \ref{lem:meetandact}.
\end{proof}

One obvious limitation of $\Orb$ is that, if the group is 2-transitive, then it
does not perform any refinement, as the only orbital graph is the complete graph
on \(\Omega\) (see Lemma \ref{everypointless}). We therefore introduce another orbital graph based refiner that
makes use of the fact that we can, like in $\Fixed$, easily stabilize points in
the group.

\begin{definition}[$\DeepOrb_M$]\label{def:deeporbrefine}
   Given $M$ a subgroup of $\Sym({\Omega})$, the map $\DeepOrb_M: \OPart(\Omega) \rightarrow \OPart(\Omega)$ is
   defined as follows:

  \begin{itemize}
  \item Given $P \in \OPart(\Omega)$, let $k \in \N$ and
    $\alpha_1,\ldots,\alpha_k \in \Omega$ be the elements in singleton cells of $P$.
    Then let \(M_0\) denote the point-wise stabilizer of $\alpha_1,\ldots,\alpha_k$ in $M$.
  \item Construct all orbital graphs of $M_0$.
  \item Given an ordered orbit partition $P$ of $\Omega$, compute a $\Gamma$-equalizer for $P$
    for every orbital graph $\Gamma$ from the previous step, using
    Algorithm~\ref{alg:equitable}.
  \item Return the meet of all refined ordered partitions from the previous step.
  \end{itemize}

  We use the notation $\DeepOrb$, without a subscript, for this refiner.
\end{definition}

$\DeepOrb$ can be seen as combining $\Orb$ and $\Fixed$. The proof of
correctness of $\DeepOrb$ is analogous to the proof for $\Orb$. The major
disadvantage of $\DeepOrb$ is that it requires calculating the orbital graphs at
every level in the search, rather than just once at the beginning. Our
experiments will investigate the practical trade-offs between $\Orb$ and $\DeepOrb$.

\subsection{Examples of refinements via graphs}

We will now present two examples of refinement in which the \(\Fixed\)
refiner will perform no refinement at all, but orbital graphs provide useful refinement.

We follow \cite{McKay201494}, referring to partitions as colourings. We will refer interchangeably to colouring vertex $j$ of the graph with colour $\delta$ and placing value $j$ into cell $\delta$ of the ordered partition.

In the following two examples we let $G:=\s_{10}$.
\begin{ex}
Let
$H_1:=\langle (1,2,...,10),(2,10)(3,9)(4,8)(5,7)\rangle$ and let $H_2$
denote the stabilizer of the set \(\{1,5\}\) in $G$. 
We are interested in calculating $D:=H_1 \cap H_2$, which is equivalent to 
calculating the stabilizer of \(\{1,5\}\) in $H_1$. 
While this problem is very simple, it allows us to show in detail how the
algorithm works.

Purely group-theoretically, if we take $x \in D$, then all we know without further calculation is that $x$
stabilizes the set $\{1,5\}$ and so we obtain the ordered partition
$P_1:=[1,5 \mid 2, 3, 4, 6, 7, 8, 9, 10]$.

Looking at the orbits of \(H_1\) produces no useful information, as \(H_1\) is
transitive and therefore \(\Fixed_{H_1}(P_1)=P_1\). Therefore the only
information that we extract from reasoning about orbits alone is that $D$ is
contained in a subgroup of $G$ that is isomorphic to $\Sym(P_1)$.

Now we use the orbital graph for $H_1$ with base-pair $(1,2)$. For simplicity we work with an undirected graph here because all
arcs exist in both directions. When calculating equitable partitions, we will only
show steps where the algorithm causes a cell of an ordered orbit partition to split, skipping
steps where no split occurs.

\textbf{Step 1:}

Using \(P_1\), we attach the colour $1$ to the vertices $1$ and $5$ (first cell of the partition) and colour $2$ to all other vertices (second cell of the partition). 

\begin{center}
\begin{tikzpicture}[shorten >=1pt,-]
  \tikzstyle{vertex}=[circle,fill=black!25,minimum size=14pt,inner sep=0pt]

  \foreach \name/\angle/\text/\col in {
    P-1/108/1/black!50,
    P-2/144/2/black!5,
    P-3/180/3/black!5,
    P-4/216/4/black!5,
    P-5/252/5/black!50,
    P-6/288/6/black!5,
    P-7/324/7/black!5,
    P-8/0/8/black!5,
    P-9/36/9/black!5,
    P-10/72/10/black!5 }
  \node[vertex] (\name) at (\angle:1.75cm) [fill=\col] {$\text$};

  \foreach \from/\to in {1/2,2/3,3/4,4/5,5/6,6/7,7/8,8/9,9/10,10/1}
    { \draw (P-\from) -- (P-\to); }
\end{tikzpicture}
\end{center}

\textbf{Step 2:}

Using \cref{alg:equitable}, we look at the vertices of colour $2$. They fall into two classes -- those who have two neighbours of colour $2$, and those who have a neighbour of colour $1$ and a neighbour of colour $2$. This splits the second cell of $P_1$ into two cells, with $2,4,6$ and $10$ in a new cell, with colour $3$.
Our second ordered partition is therefore
$P_2:=[1,5 \mid 3, 7, 8, 9 \mid 2, 4, 6, 10]$ and $D$ is isomorphic to a subgroup of $\Sym(P_2)$.\\

\textbf{Step 3:}

Continuing the algorithm, we see that the vertices of colour $2$ can be further divided into a single vertex that has two neighbours of colour $2$ (vertex 3), those that have two neighbours of colour $3$ (vertex 8) and those that have one neighbour of colour $2$ and one of colour $3$ (vertices 7 and 9). With this information we obtain the new ordered partition
$P_3:=[1,5 \mid 7,9 \mid 2, 4, 6, 10 \mid 3 \mid 8]$,  and $D$ is isomorphic to a subgroup of $\Sym(P_3)$.\\

\textbf{Step 4:}

Our algorithm continues, looking at cell $3$. Here the vertices can be divided into two categories, namely those with a neighbour of colour $1$ and a neighbour of colour $4$ (vertices 2 and 4), and those with a neighbour of colour $1$ and a neighbour of colour $2$ (vertices 6 and 10).
Our final ordered partition is therefore
$P_4:=[1,5 \mid 8 \mid 6,10 \mid 3\mid 7, 9 \mid 2, 4]$.\\

There is no further information in the graph at the moment, so we conclude by stating that $D$ is isomorphic to a subgroup of
$\Sym(P_4)$, which has order $16$. 

Given that $P_4$ is an ordered orbit partition for \(H_1 \cap H_2\), it is no surprise that no further refinement is possible. 
We also know that $D \le H_1$ and $|H_1|=20$, so
$|D| \le 4$.
\end{ex}

In this case, we were able to deduce the exact orbits of \(H_1 \cap H_2\). This is not true in general, as our next example will show. But we will still perform useful deductions using the orbital graph that cannot be performed by  \(\Fixed\).

\begin{ex}
Again $H_1:=\langle (1,2,...,10),(2,10)(3,9)(4,8)(5,7)\rangle$, but this time $H_2$ is the stabilizer of the set $\{1,6\}$ in $G$. Again we wish to calculate $D:=H_1 \cap H_2$.
Using reasoning from the orbits of \(H_2\) we obtain the ordered partition
$Q_1=[1,6
\mid 2, 3, 4, 5, 7, 8, 9, 10]$. We note that
\(\Fixed_{H_2}(Q_1)=Q_1\) because \(H_2\) is transitive
and \(Q_1\) has no singleton cells.
We consider the same orbital graph as in the previous example.\\

\textbf{Step 1:}

We create a graph where we attach the colour $1$ to the vertices $1$ and $6$,
and colour $2$ to all other vertices. This gives the ordered partition $[1,6 \mid 2, 3,
4, 5, 7, 8, 9, 10]$.\\

\textbf{Step 2:}

We split the vertices in the second cell into vertices that have different
coloured neighbours (vertices 2,7,5 and 10) and those with two neighbours of
colour $2$ (vertices 3,4,8 and 9).
Our second ordered partition is therefore
$Q_2 = [1,6 \mid 3, 4, 8, 9 \mid 2, 7, 5, 10]$.\\

\textbf{Step 3:}

The algorithm will now run through the remaining reasoning, not producing any
further refinements, as the ordered partition is already equitable.
We can say that $D$ is isomorphic to a subgroup of $\Sym(Q_2)$ of order $2^7
\cdot 3^2$. Since $D$ also is a subgroup of $H_1$ which has order $20$, we
deduce that $|D| \le 4$.

Putting together both of our examples, using orbital graphs we know that any
elements of \(H_1\) in \(\Co(P_1,Q_1)\) are also in \(\Co(P_2,Q_2)\), by
Lemma~\ref{lem:backtrack}. However, \(Co(P_2,Q_2) = \varnothing\), because \(P_2\)
and \(Q_2\) have different numbers of cells, and therefore we have deduced there
are no members of \(H_1\) in \(\Co(P_1,Q_1)\).
\end{ex}

\section{Experiments}

We will now demonstrate how our algorithm performs on a variety of
problems. There are two main questions we want to answer:

\begin{enumerate}
\item How much can we speed up the search?
\item What are the worst-case slowdowns when using orbital graphs?
\end{enumerate}

Generating a set of ``random'' problems -- taken from all possible group
intersection and stabilizer problems -- is not possible because there is no method of
producing random permutation groups of large size. Also, we do not want to
consider problems that are very simple -- partition backtrack solves many
problems almost instantly, with or without orbital graphs.
We consider two main categories of problems: The first one is calculating set stabilizers in ``grid groups'' that arise in A.I. in SAT solvers (the Boolean satisfiability problem) and Constraint Programming and are used for symmetry-breaking, and the second one is
taking intersections of primitive groups with wreath products.
We discuss the following four algorithms:

\begin{enumerate}
\item \textbf{Fixed}: $\Fixed$, the traditional algorithm of Leon.
\item \textbf{PreOrbital}: Use $\Fixed$ with $\Orb$ as described in Section 4.
\item \textbf{DeepOrbital}: Use $\Fixed$ with $\DeepOrb$ as described in Section 4.
\item \textbf{FirstOrbital}: Use $\Fixed$ and a variant of $\DeepOrb$ that
keeps building orbital graphs until the first refinement phase at which at least one non-futile orbital graph
is found. Then these orbital graphs are used in all later refinement phases.
\end{enumerate}

\textbf{FirstOrbital} is implemented as a variant of  \textbf{DeepOrbital}.
\textbf{FirstOrbital} is difficult to cleanly specify theoretically, as it is tied to
behaviour of the search.
\textbf{FirstOrbital} is a valid refiner in practice, because down
each branch of search we will find the first node with at least one non-futile orbital
graph at the same height. 
This is a consequence of the definition of the $\Fixed$ refiner -- it ensures that there is an element of the group
that maps the fixed points down the first branch to the fixed points down every other branch at every
level.

We expect that \textbf{Fixed} will always produce larger search trees than
\textbf{PreOrbital}, which in turn produces larger search trees than
\textbf{FirstOrbital}, which will produces larger search trees than \textbf{DeepOrbital}.
In some rare cases the searches can be larger with a better refiner, as the partition backtrack
algorithm may choose to branch on a different cell, or on the elements of a cell in a different order.
In practice this occurred in less than 2\% of our experiments.

The purpose of these experiments is to show when the decreased search size
provided by \textbf{PreOrbital}, \textbf{DeepOrbital} and \textbf{FirstOrbital}
outweighs the increased cost of calculating and filtering orbital graphs.

All of our experiments are performed in GAP (see ~\cite{GAP4}). We use the
implementation of partition backtrack provided in the Ferret package (see
\cite{Fer}), which includes both an implementation of Leon's original partition backtrack
algorithms, and our new algorithms. In the experiments in this paper,
Ferret's implementations of Leon's algorithms are always faster than the
implementations in GAP, due to improved quality of implementation and the data structures used.

All of our experiments were performed on a machine with eight Intel Xeon E5520
cpus, running at 2.27GHz and 20GB RAM.
Each experiment was given a five minute timeout and a limit of 1GB of RAM.

\subsection{Set stabilizers in Grid Groups}

A typical example for a problem that involves grid groups is shift scheduling. 
If we have $n$ workers and $m$ time-slots, where the workers are interchangeable (in terms of their qualification) and the time-slots are equally important, then this symmetry in the system can be expressed via a grid group.

\begin{definition}[Grid Group]\label{def:gridgroup}
  Let $n \in \N$. The direct product $\s_m \times \s_m$ acts on the set
  $\{1,\ldots, m\} \times \{1,\ldots, m\}$ of pairs in the following way:

  For all $(i,j) \in  \{1,\ldots, m\} \times \{1,\ldots, m\}$
  and all $(\sigma,\tau) \in \s_m \times \s_m$ we define

  \[
    {(i,j)}^{(\sigma,\tau)} := (i^{\sigma}, j^{\tau}).
  \]

  The subgroup $G \le \Sym(\{1,\ldots, m\} \times \{1,\ldots, m\})$ defined by this
  action is called the \textbf{$m \times m$ grid group}.
\end{definition}

While the construction of the grid group is done by starting with an $m$
by $m$ grid of points and permuting rows or columns independently of each other,
we represent this group as a subgroup of $\s_{m^2}$, and we do not
assume prior knowledge of the grid structure of the action.
We considered two different variants of set stabilizers: stabilizing a random
subset of $\{1, \ldots, m^2\}$ of size $\lfloor \frac{m^2}{2} \rfloor$, and stabilizing
random subsets of $\{1, \ldots, m^2\}$ containing exactly $\lfloor \frac{m}{2}
\rfloor$ points in each row.
The specific choice of sets in the second variant leads to more difficult problem, intuitively, because it is harder to prove that the stabilizer does not permute the rows of the grid.

\begin{table}
\begin{center}
\begin{tabular}{|r|r|r|r|r|r|r|r|r|r|}
\hline
 & \multicolumn{2}{|c|}{\textbf{Fixed}} & \multicolumn{2}{|c}{\textbf{PreOrbital}} &  \multicolumn{2}{|c|}{\textbf{DeepOrbital}} \\
$m$ & \# &  Time & \# &  Time  & \# &  Time  \\
\hline
10 & 10  & 0.1 & 10  & 0.1 & 10  & 0.5 \\
20 & 10  & 2.0 & 10  & 0.5 & 10  & 10.8 \\
30 & 10  & 15.6 & 10  & 2.9 & 10  & 79.4 \\
40 & 10  & 108.6 & 10  & 10.5 & 10  & 262.9 \\
50 & 7  & 1,506.5 & 10  & 40.2 & 10  & 643.1 \\
60 & 3  & 2,554.5 & 10  & 101.9 & 6  & 2,253.7 \\
70 & 0 & 3,000 & 9  & 490.6 & 2  & 2,862.1 \\
80 & 0 & 3,000 & 10  & 497.8 & 4  & 1,951.4 \\
90 & 1  & 2,772.3 & 10  & 1,052.7 & 0 & 3,000 \\
100 & 0 & 3,000 & 9  & 1,195.2 & 4  & 2,009.1 \\
110 & 0 & 3,000 & 9  & 1,554.3 & 5  & 1,869.7 \\
120 & 0 & 3,000 & 0 & 3,000 & 0 & 3,000 \\
\hline
\end{tabular}
\end{center}
\caption{Time taken to find set stabilizers in grid groups (average of 10
runs). ``\#'' is the number of problems solved. ``Time'' is the total time in seconds to complete (or timeout) the 10 instances.\label{fig:type1grid}}
\end{table}

\begin{table}
\begin{center}
\begin{tabular}{|r|r|r|r|r|r|r|r|r|r|}
\hline
 & \multicolumn{2}{|c|}{\textbf{Fixed}} & \multicolumn{2}{|c}{\textbf{PreOrbital}} &  \multicolumn{2}{|c|}{\textbf{DeepOrbital}} \\
$m$ & \# & Time & \# & Time  & \#  & Time  \\
\hline
10 & 10  & 0.1 & 10  & 0.1 & 10  & 0.3 \\
20 & 4  & 1,892.8 & 10  & 0.4 & 10  & 0.9 \\
30 & 3  & 2,107.3 & 10  & 2.1 & 10  & 5.1 \\
40 & 0 & 3,000 & 10  & 8.1 & 10  & 8.2 \\
50 & 0 & 3,000 & 10  & 30.3 & 10  & 30.3 \\
60 & 0 & 3,000 & 10  & 87.3 & 10  & 84.5 \\
70 & 0 & 3,000 & 10  & 235.0 & 10  & 236.2 \\
80 & 0 & 3,000 & 10  & 542.0 & 9  & 802.2 \\
90 & 0 & 3,000 & 10  & 1,327.4 & 9  & 1,393.1 \\
100 & 0 & 3,000 & 3  & 2,589.7 & 4  & 2,553.4 \\
110 & 0 & 3,000 & 2  & 2,919.0 & 1  & 2,941.8 \\
120 & 0 & 3,000 & 1  & 2,880.9 & 1  & 2,890.2 \\
\hline
\end{tabular}
\end{center}
\caption{Time taken to find row-balanced set stabilizers in grid groups
(average of 10 runs). ``\#'' is the number of problems solved.
``Time'' is the total time in seconds to complete (or timeout) the 10 instances.\label{fig:type2grid}}
\end{table}

First, we generate a random set of size \( \lfloor \frac{m^2}{2} \rfloor \) of
\(\{1 \ldots m^2\}\). The results are shown in Table~\ref{fig:type1grid}. For times, we use the timeout time (5
minutes) for instances that ran out of either time or memory. We do not display
results for \textbf{FirstOrbital}, because they are identical to
\textbf{PreOrbital}.
Our problems are randomly generated and therefore some are simpler than others, which is
why we see that \textbf{Fixed} is able to solve one problem on a grid of size
90. However, for all sizes of grids we see a substantial improvement from building orbital
graphs. As
\textbf{DeepOrbital} keeps rebuilding the graphs whenever a new point is fixed
in the ordered partition, the time taken is generally longer, as the extra graphs do not
pay back their cost for most of these problems.
In the second experiment, we generate row-balanced sets. These include exactly
$\lfloor \frac{m}{2} \rfloor$ points in each for row of the grid. 
As mentioned earlier, we expect these instances to be more difficult.
The results of this experiment are shown in Table~\ref{fig:type2grid}. The
major difference here is that no instances of size bigger than 30 were solved by
\textbf{Fixed}, while \textbf{PreOrbital} and \textbf{DeepOrbital} solved almost
all instances. \textbf{FirstOrbital} once again had identical results to
\textbf{PreOrbital}, so we skip this step.

\begin{table}
\begin{center}
\begin{tabular}{|l|l|r|r|r|r|r}
\hline
Problem&Refiner & Building & Building & Refining \\
& & Stab Chains & Orbital Graphs & Orbital Graphs\\
\hline
& \textbf{Fixed} & 39.4 & 0 & 0\\
Set Stab& \textbf{PreOrbital} & 95.2 & 1.1 & 0.1\\
& \textbf{DeepOrbital} & 25.5 & 58.5 & 0.4\\
\hline
Row Balanced& \textbf{Fixed} & 1.3 & 0 & 0\\
Set Stab& \textbf{PreOrbital} & 97.8 & 0.9 & 0.1\\
& \textbf{DeepOrbital} & 97.5 & 1.0 & 0.1\\
\hline
\end{tabular}
\end{center}
\caption{Percentage of runtime spent on stabilizer chains and orbital graphs in Grid set stabilizer experiments.\label{fig:breakdowngrid}}
\end{table}

In both experiments, we also recorded the amount of time spent performing
different parts of the search. These results are shown in
Table~\ref{fig:breakdowngrid}. We see that \textbf{PreOrbital} spends around 1\%
of the total time both building and refining orbital graphs. While a much higher
proportion of time is spent building stabilizer chains, these are chains are a
subset of the chains which had the be built by \textbf{Fixed}. Further, we investigated the
size of the searches produced, and found that in almost every problem \textbf{DeepOrbital}
and \textbf{PreOrbital} were able to find the stabilizer (which was the trivial group) without
performing any branching. Therefore, the building of
these stabilizer chains is now the limiting factor to any further improvement.
We see that \textbf{DeepOrbital} behaves very strangely, spending much more time
building graphs with non row-balanced sets than with row-balanced sets. These
results occur because during refinement the non row-balanced problems take more
refinement steps to reach a fixed ordered partition. This results in more graphs being
created. We conclude that the performance of \textbf{DeepOrbital} can be very
unpredictable.

Of the 240 experiments, for
\textbf{Fixed}, 68 instances finish, 151 timeout and 21 run out of memory. For
\textbf{PreOrbital}, 203 instances finish, 37 run out of time and no instance
runs out of memory. For \textbf{DeepOrbital}, 165 instances finish, 51 run out of
time and 24 run out of memory.

The main observation of this experiment is that the overhead of
\textbf{PreOrbital} is very small while there is an exponential decrease in
search size. While \textbf{DeepOrbital} did not take much more time, it did take
much more memory. Further, its behaviour is unpredictable. The behaviour of
\textbf{FirstOrbital} is identical to \textbf{PreOrbital} on this problem. The
limiting factor is now the time taken finding stabilizer chains, which is
outside the scope of this article.


\subsection{Intersection of Primitive Groups}

In our second experiment, we consider intersection of groups. We concentrate on primitive groups because the primitive group library in GAP
provides easy access to a large set of groups.
First we intersected pairs of primitive groups, but then it turned out that this problem is
usually extremely fast to solve, with or without orbital graphs. Therefore we instead intersect a
primitive group with a wreath product of symmetric groups, and we found that this
produced challenging problems.

For each \(n \in \N\) we take all primitive groups except the symmetric groups
\(S_n\), the alternating groups \(A_n\), the cyclic groups \(C_n\) and the
dihedral groups \(D_n\) in their natural action on \(n\) points. We remove these
groups, because GAP handles them as a special case, and intersections
involving these groups are  simple to construct.
All 2-transitive groups are primitive and therefore we will encounter many 2-transitive groups in this experiment. However, we already know from Lemma \ref{everypointless} that they have futile orbital graphs. Therefore we consider 2-transitive groups separately.

Given a primitive group \(G\) acting on \(n\) points, we create the wreath
products \(S_{n/x} \wr S_x\), where \(x \in \{2,\ldots,7\}\) and \(n/x\) is an
integer. We conjugate each of these wreath products by a random element of \(S_n\)
and intersect the result with \(G\). We experimented with other wreath products
and found similar results.
With primitive groups on up to 600 points, we produce a total of
2,752 experiments on primitive groups that are not 2-transitive and 1,140
experiments for 2-transitive groups. We ran each experiment for a maximum of five minutes.
Note that we build the orbital graphs for both the primitive groups and the
wreath products. The orbital graphs on wreath products are often very large.
There are two orbital graphs of the wreath product of \(S_a \wr S_b\), one
consists of \(a\) cliques of size \(b\), and the second all the other edges
of the graph.

Looking firstly at groups which are primitive but not 2-transitive,  \textbf{Fixed} solved 1,794 problems within the timeout,
\textbf{PreOrbital} solved 2,410, \textbf{DeepOrbital} solved 2,377 and \textbf{FirstOrbital} solved 2,411.
We show the cumulative time taken to solve (or timeout) all problems in Table~\ref{fig:graph2prim}. Here we can see that all the orbital techniques solve problems on average much faster. In particular, the many primitive groups on 256 points cause severe difficulties for the \textbf{Fixed} refiner. There is not a similar spike for 512 points, because we only include problems at least one algorithm solved within the timeout.
Our results show that all techniques involving orbital graphs solve substantially more problems within the timeout,
and the problems solved were
solved much faster with orbital graphs. \textbf{DeepOrbital} is slightly slower but not
significantly so, and \textbf{PreOrbital} and \textbf{FirstOrbital} are almost
identical, which we would expect.
For 2-transitive groups
\textbf{Fixed} solved 992 problems, \textbf{PreOrbital} solved 982,
\textbf{DeepOrbital} solved 1,083 and \textbf{FirstOrbital} solved 1,071.
We show the cumulative time taken to solve (or timeout) all problems in Figure~\ref{fig:graph2prim2}.
Here we find orbital graphs are less useful, but still allow us to solve more problems within the timeout,
and to solve problems within the timeout faster. \textbf{PreOrbital} does not
pay off the cost of building orbital graphs, taking significantly longer.
\textbf{DeepOrbital} solves more problems, and solves them faster. \textbf{FirstOrbital} solves the
most problems, taking only slightly longer than \textbf{DeepOrbital}.

\begin{table}
\begin{center}
\begin{tabular}{|l|l|r|r|r|r|r}
\hline
&&Orbital    & Orbital & Orbital & Orbital \\
&& much slower& slower  & faster  & much faster\\
\hline
Not& \textbf{PreOrbital}      & 13  & 157 & 149 & 1519 \\
2-trans& \textbf{DeepOrbital} & 297 & 191 &  77 & 1358 \\
& \textbf{FirstOrbital}       &  12 & 146 & 162 & 1519 \\
\hline
& \textbf{PreOrbital}         & 114 & 526 & 63 & 0\\
2-trans& \textbf{DeepOrbital} & 38 & 197 & 206 & 351\\
& \textbf{FirstOrbital}       & 33 & 176 & 219 & 346\\
\hline
\end{tabular}
\end{center}
\caption{The relative performance, compared to \textbf{Fixed}, on Primitive Intersection experiments.\label{fig:RelativeSpeed}}
\end{table}

\begin{table}
\begin{center}
\begin{tabular}{|l|l|r|r|r|r|r}
\hline
Problem& Refiner& Build & Build & Refine \\
& & Stab Chains & Orbital Graphs & Orbital Graphs\\
\hline
& \textbf{Fixed} & 99.7 & 0 & 0\\
Not& \textbf{PreOrbital} & 97.5 & 0.5 & 0.1\\
2-trans& \textbf{DeepOrbital} & 77.5 & 7.2 & 10.2\\
& \textbf{FirstOrbital} & 97.5 & 0.5 & 0.1\\
\hline
& \textbf{Fixed} & 80.8 & 0 & 0\\
& \textbf{PreOrbital} & 78.3 & 0.1 & 4.7\\
2-trans& \textbf{DeepOrbital} & 86.7 & 3.6 & 6.1\\
& \textbf{FirstOrbital} & 88.9 & 0.6 & 6.6\\
\hline
\end{tabular}
\end{center}
\caption{Percentage of runtime spent building stabilizer chains and orbital graphs in Primitive Intersection experiments.\label{fig:breakdownprim}}
\end{table}

In Table~\ref{fig:RelativeSpeed} we compare the algorithms $\textbf{PreOrbital}, \textbf{DeepOrbital}$ and $\textbf{FirstOrbital}$ to $\Fixed$.
The column ``much faster'' captures instances where the algorithms involving orbital graphs are at least two times faster than $\Fixed$,
and similarly the column ``much slower'' contains cases where orbital graphs are at least two times slower.
Table~\ref{fig:breakdownprim} shows how much time was spent in different parts of
the algorithm during the search. As with the grid experiments, the majority of
time was spent finding stabilizer chains. The \textbf{PreOrbital} and
\textbf{FirstOrbital} algorithms both spend very little time building and
refining with orbital graphs. The search sizes are significantly
reduced, for groups acting on over 500 points \textbf{Fixed} averages around 3.7
million nodes on problems it can solve within the timeout, while
\textbf{FirstOrbital} averages 21,000 nodes on the same set of problems.
Similar to the earlier grid experiments, the main limiting factor in further performance
improvements is the building of stabilizer chains.

\subsection{Conclusions}

Our experiments show that using orbital graphs in partition backtrack can lead
to substantial performance improvements -- in the case of grid groups we can
easily solve much larger problems than before. Here the only limit is how
quickly we can
calculate stabilizer chains. While \textbf{DeepOrbital} is still a huge improvement
over \textbf{Fixed} alone, the overhead of calculating orbital graphs for many groups
is not recovered. Further improvements to the performance of set stabilizers in larger grids
would require either new methods of finding stabilizer chains, or fundamental changes to
partition backtrack to remove the need to calculate so many stabilizer chains.
For primitive groups, the results are more mixed. Here we can see that the cost
of building an orbital graph is larger -- wreath products of symmetric groups in particular
produce large orbital graphs. For groups that are not 2-transitive, all our orbital graph methods
perform similarly. For 2-transitive groups however, we see that \textbf{PreOrbital}, as expected, is
slightly slower than \textbf{Fixed}, but \textbf{FirstOrbital} and \textbf{DeepOrbital} perform well. There
is a small set of problems that only \textbf{DeepOrbital} is able to solve within the time limit.

The most important result to take away from our experiments is that \textbf{FirstOrbital} is
a balanced algorithm with good practical behaviour. In all our experiments it has a low overhead.
We suggest always using \textbf{FirstOrbital}, because often it is much better than \textbf{Fixed}, and it is close to the best of the algorithms on all our problem classes.

\begin{figure}
\begin{center}
\includegraphics[width=0.8\textwidth]{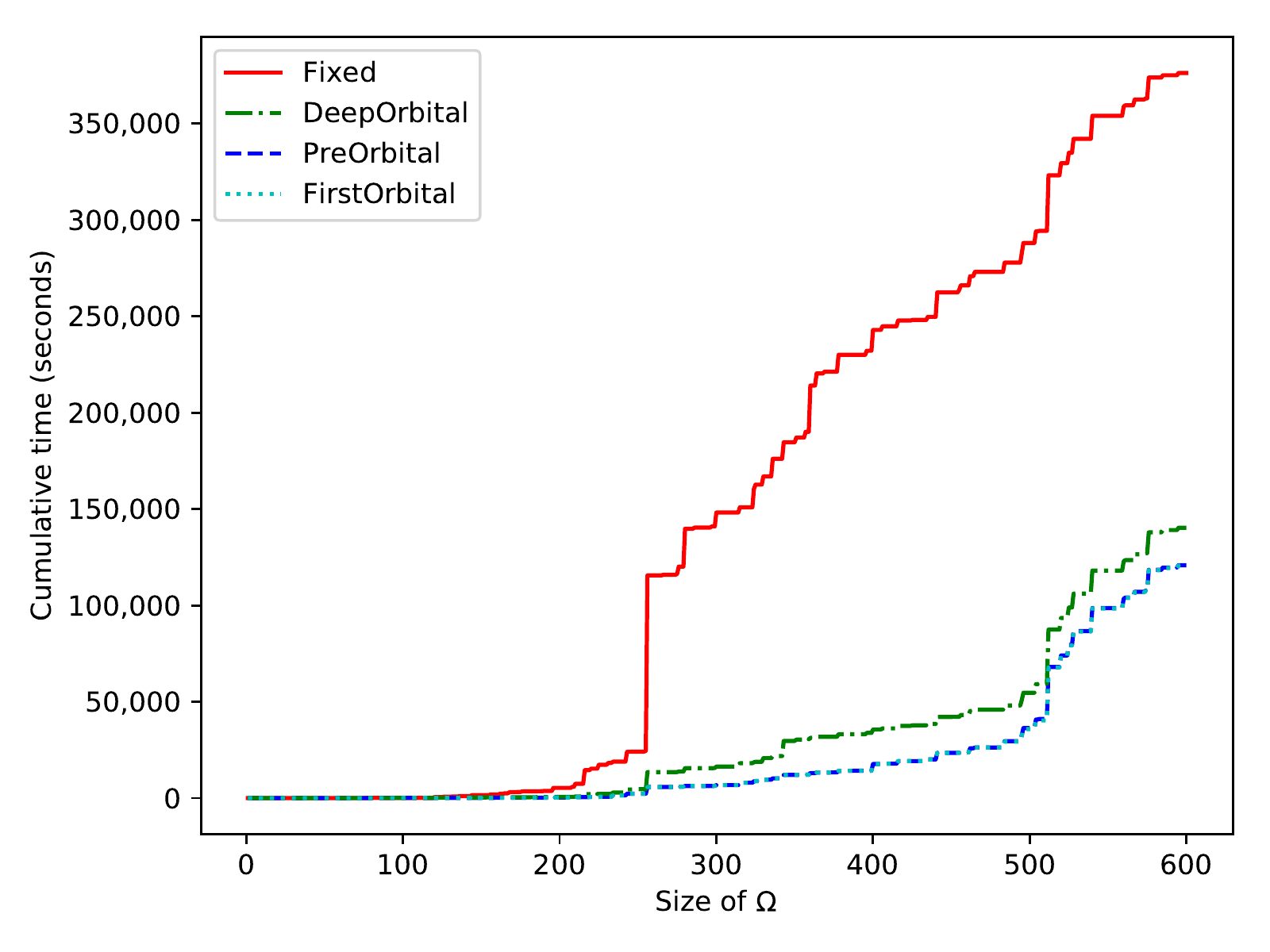}

\caption{Cumulative time taken to intersect primitive and not 2-transitive primitive groups with a wreath product\label{fig:graph2prim}}

\end{center}
\end{figure}

\begin{figure}
\begin{center}
\includegraphics[width=0.8\textwidth]{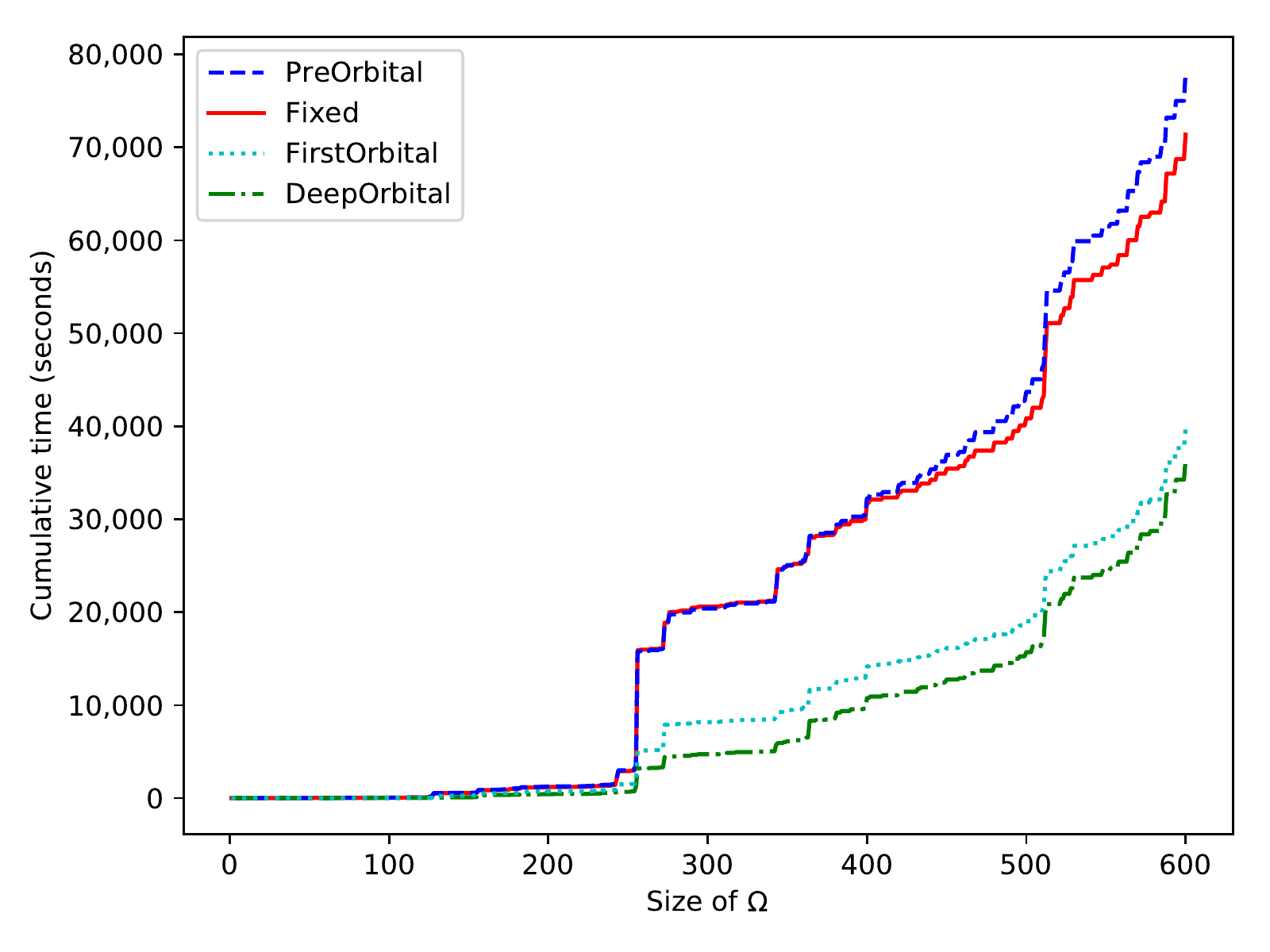}
\end{center}
\caption{Cumulative time taken to intersect 2-transitive groups with a wreath product\label{fig:graph2prim2}}
\end{figure}

\section{Final comments and future work}

Our experiments show that orbital graphs can be very useful for refining ordered partitions. On a large range of problems they provide significant performance improvements, and the worst-case slowdowns they cause are not significantly harmful. In particular, \textbf{FirstOrbital} provides a simple and cheap method of significantly improving the state of the art. However, we cannot theoretically predict in advance when orbital graphs will be useful, and exactly how useful they will be: How can we measure the usefulness of a graph for refinements? Can we always find a graph that is useful in terms of refining ordered partitions?

The results in Section 4 suggest that it is worth investigating alternative refiners in more detail -- if it is possible to efficiently feed structural properties of groups or their action under consideration, into a refiner, then we may be able to improve the runtime of the search algorithm even more.
In cases where using $\Orb$ and $\DeepOrb$ still leads to long run-times in practice, we intend to investigate other types of graphs and combinatorial structures that can be used for refinements in future work. 
Also we plan to improve other parts of the partition backtrack framework, because for many problems we are reaching the limit of improvements that can be provided by better refiners alone. This is because the search involves almost no branching anymore.
In particular, our experiments suggest that
for more substantial progress we must either speed up the calculation of stabilizer chains, or reduce the number of stabilizer chains that must be found.

\bibliographystyle{elsarticle-harv}
\bibliography{GraphsReallyHelp}{}

\begin{thebibliography}{12}
\expandafter\ifx\csname natexlab\endcsname\relax\def\natexlab#1{#1}\fi
\expandafter\ifx\csname url\endcsname\relax
  \def\url#1{\texttt{#1}}\fi
\expandafter\ifx\csname urlprefix\endcsname\relax\def\urlprefix{URL }\fi

\bibitem[{Bang-Jensen and Gutin(2008)}]{Bang-Jensen:2008:DTA:1523254}
Bang-Jensen, J., Gutin, G.~Z., 2008. Digraphs: Theory, Algorithms and
  Applications, 2nd Edition. Springer Publishing Company, Incorporated.

\bibitem[{Bosma et~al.(1997)Bosma, Cannon, and Playoust}]{MR1484478}
Bosma, W., Cannon, J., Playoust, C., 1997. The {M}agma algebra system. {I}.
  {T}he user language. J. Symbolic Comput. 24~(3-4), 235--265, computational
  algebra and number theory (London, 1993).
\newline\urlprefix\url{http://dx.doi.org/10.1006/jsco.1996.0125}

\bibitem[{Cameron(1999)}]{C}
Cameron, P., 1999. Permutation Groups. London Math. Soc. Student Texts.
  Cambridge University Press.

\bibitem[{Dixon and Mortimer(1996)}]{dixon1996permutation}
Dixon, J., Mortimer, B., 1996. Permutation Groups. Graduate Texts in
  Mathematics. Springer New York.

\bibitem[{GAP(2016)}]{GAP4}
GAP, 2016. {GAP -- Groups, Algorithms, and Programming, Version 4.8.4}. The
  GAP~Group.
\newline\urlprefix\url{http://www.gap-system.org}

\bibitem[{H\"ahndel et~al.(2017)H\"ahndel, Jefferson, Pfeiffer, and
  Waldecker}]{orbitals}
H\"ahndel, P., Jefferson, C., Pfeiffer, M., Waldecker, R., 2017. Computational
  aspects of orbital graphs. submitted.

\bibitem[{Jefferson(2016)}]{Fer}
Jefferson, C., 2016. The Ferret Package.
\newline\urlprefix\url{https://github.com/gap-packages/ferret}

\bibitem[{Leon(1991)}]{Leon}
Leon, J.~S., 1991. Permutation group algorithms based on partitions, {I}:
  Theory and algorithms. Journal of Symbolic Computation 12~(4), 533--583.
\newline\urlprefix\url{http://www.sciencedirect.com/science/article/pii/S0747717108801034}

\bibitem[{McKay(1980)}]{McKay80}
McKay, B.~D., 1980. Practical graph isomorphism. Congr. Numer.~(30), 45--87.

\bibitem[{McKay and Piperno(2014)}]{McKay201494}
McKay, B.~D., Piperno, A., 2014. Practical graph isomorphism, {II}. Journal of
  Symbolic Computation 60, 94--112.
\newline\urlprefix\url{http://www.sciencedirect.com/science/article/pii/S0747717113001193}

\bibitem[{Seress(2003)}]{seress2003permutation}
Seress, {\'A}., 2003. Permutation Group Algorithms. Cambridge Tracts in
  Mathematics. Cambridge University Press.
\newline\urlprefix\url{https://books.google.co.uk/books?id=hxFqdbfc\_CMC}

\bibitem[{Thei{\ss}en(1995)}]{Th}
Thei{\ss}en, H., 1995. Eine {M}ethode zur {N}ormalisatorberechnung in
  {P}ermutationsgruppen mit {A}nwendungen in der {K}onstruktion primitiver
  {G}ruppen. Ph.D. thesis, Lehrstuhl D f\"ur Mathematik, RWTH Aachen.

\end{thebibliography}


\begin{thebibliography}{0}
\expandafter\ifx\csname natexlab\endcsname\relax\def\natexlab#1{#1}\fi
\expandafter\ifx\csname url\endcsname\relax
  \def\url#1{\texttt{#1}}\fi
\expandafter\ifx\csname urlprefix\endcsname\relax\def\urlprefix{URL }\fi

\end{thebibliography}
\end{document}